\theoremstyle{cupplain}
\newtheorem{theorem}{Theorem}[section]
\newtheorem{lemma}[theorem]{Lemma}
\newtheorem{prop}[theorem]{Proposition}
\newtheorem{corollary}[theorem]{Corollary}
\theoremstyle{cupdefinition}
\newtheorem{definition}{Definition}[section]
\theoremstyle{cupremark}
\newtheorem{example}[theorem]{Example}
\theoremstyle{cupproof}
\numberwithin{equation}{section}
\renewcommand{\r}[1]{\text{\textsc{Rel}}\left({#1}\right)}
\newcommand{\mto}{\multimap}
\newcommand{\fr}[1]{\text{\textsc{FRel}}\left({#1}\right)}
\newcommand{\iso}[1]{\overset{\scriptscriptstyle #1}{\simeq}}
\newcommand{\apply}[3]{{#2}\,{#1}\,{#3}}
\newcommand{\N}[0]{\mathbb{N}}
\newcommand{\Z}[0]{\mathbb{Z}}
\def\mathobj#1{\mbox{$#1$}}
\def\NN{\mathobj{\mathbb{N}}}
\newcommand{\C}[0]{\mathcal{C}}
\newcommand{\D}[0]{\mathcal{D}}
\def\cC{\text{$\mathcal C$}}
\def\cD{\text{$\mathcal D$}}
\renewcommand{\a}[0]{\alpha}
\renewcommand{\b}[0]{\beta}
\renewcommand{\c}[0]{\gamma}
\renewcommand{\phi}[0]{\varphi}
\renewcommand{\i}[0]{\iota}
\renewcommand{\l}[0]{\lambda}
\newcommand{\m}[0]{\mu}
\newcommand{\Endo}[0]{\operatorname{End}}
\newcommand{\Szym}[0]{\operatorname{Szym}}
\newcommand{\LE}[0]{\operatorname{LE}}
\newcommand{\LM}[0]{\operatorname{LM}}
\newcommand{\gker}{\operatorname{gker}}
\newcommand{\gim}{\operatorname{gim}}
\newcommand{\id}{\operatorname{id}}
\newcommand{\card}{\operatorname{card}}
\newcommand{\Feq}[1]{\simeq_{#1}}
\newcommand{\Leq}[0]{\Feq{L}}
\newcommand{\generatedby}[1]{\left\langle{#1}\right\rangle}
\providecommand{\keywords}[1]
{
  \small	
  \noindent{\textit{Keywords: }} #1
}
\providecommand{\msc}[1]
{
  \small	
  \noindent{\textit{MSC2020: }} #1
}
\begin{document}

\title{Linear Relations of Finite Length Modules are Shift Equivalent to Maps\thanks{The research of MP was partially supported by Polish National Science Center under Opus Grant 2019/35/B/ST1/00874.

The publication has been supported by a grant from the Faculty of Mathematics and Computer Science under the Strategic Programme Excellence Initiative at Jagiellonian University.

We gratefully acknowledge Polish high-performance computing infrastructure PLGrid (HPC Center: ACK Cyfronet AGH) for providing computer facilities and support within computational grant no. PLG/2023/016390.}}

\author[1]{Bartosz Furmanek}
\author[1]{Filip Oskar {\L}anecki}
\author[1]{Mateusz Przybylski}
\author[2]{Jim Wiseman}

\affil[1]{Faculty of Mathematics and Computer Science, Jagiellonian University, {\L}ojasiewicza 6, Krak\'ow, 30-348, Poland\\
(\it{mateusz.przybylski@uj.edu.pl})}
\affil[2]{Department of Mathematics, Agnes Scott College, 141 East College Avenue, Decatur, 30030, GA,  USA}

\maketitle

\begin{abstract}
Linear relations, defined as submodules of the direct sum of two modules, can be viewed as objects that carry dynamical information and reflect the inherent uncertainty of sampled dynamics. These objects also provide an algebraic structure that enables the definition of subtle invariants for dynamical systems. In this paper, we prove that linear relations defined on modules of finite length are shift equivalent to bijective mappings. 
\end{abstract}

\keywords{linear relations, shift equivalence, Szymczak category, Leray functor}

\msc{{37B30} (Primary), {18B10} (Secondary)}

%

\section{Introduction}

A multivalued map is a convenient tool for analyzing sampled dynamics, that is the dynamics derived from experiments, various types of data such as time series, and similar sources. By its construction, a multivalued map can associate an argument with a set of values, which can be utilized to capture uncertainties in the data arising from measurements, reduce the volume of data, and more. Such a multivalued map can then serve as a generator of a discrete dynamical system. 

To investigate dynamical phenomena present in the data, various methods can be employed, such as machine learning approaches \cite{J2020}, topological invariants \cite{BMMP2020}, and others \cite{BGHKMV2024}. Since topological invariants appear to have a number of advantages while dealing with data, this approach is of interest. One of the particular methods to apply is Conley index theory \cite{Con1978}. Initially, the index was developed for flows. Later, the method was extended to discrete dynamical systems \cite{RS1988,Mr1990,Sz1995} and for their multivalued counterparts \cite{KM1995,BMW2024}, allowing its usage in applications and sampled dynamics \cite{BMMP2020,BGHKMV2024} among others. 

In the continuous case, the Conley index is a homotopical invariant, while in the discrete time case it is defined in an algebraic setting. Since a map $f\colon X\to X$ cannot be directly represented in a computational setting, it is necessary to use enclosures of the map $f$, which may take a form of a multivalued map $F\colon X\mto X$. Different techniques, such as interval arithmetic \cite{M1966} or binning \cite{BMMP2020}, may be used to obtain the enclosure of $f$, that is a multivalued map $F$, such that $f(x)\in F(X)$ for $x\in X$. When $F$ satisfies certain additional conditions, the multivalued map may be used to compute $f_*$, the map induced in homology by $f$. This $f_*$ map is a representative of some class of equivalent objects. The class is the Conley index. 

Typically, in order to obtain $f_*$, one has to find the projections $p$ and $q$ from the graph of multivalued map $F$ to the domain and the codomain of $F$, respectively. If fibers of $p$ are acyclic, then $p_*$, the map induced in homology by $p$, is an isomorphism by the Vietoris-Begle Theorem \cite{V1927}, and $f_*=q_*\circ p_*^{-1}$. The acyclicity condition is equivalent to the acyclicity of values of $F$. However, this acyclicity condition is the most challenging condition imposed on the map $F$ \cite{BMMP2020}. To overcome this challenge, one often has to artificially enlarge the enclosures, which leads to overestimation of values and, consequently, a loss of dynamical subtlety carried by the multivalued map $F$. 

In this context, our aim is to obtain Conley-type information without necessity of enlarging the values of multivalued maps. To achieve this, we need to address the following issue: the construction of the Conley index mentioned earlier cannot proceed without assuming the acyclicity of the values of $F$. Therefore, we need to investigate an earlier step - specifically, the Szymczak category. Szymczak proved \cite{Sz1995} that in the context of Conley theory for discrete dynamical systems, there exists a universal functorial construction that, as a result, allows the identification of all objects carrying similar dynamical information. In other words, any other construction \cite{RS1988,Mr1990} factorizes through the functor and category proposed by Szymczak. Fortunately, his construction is general and may be applied to any category. Initial attempts are proposed in \cite{PMW2023,AMPW2024}, where the authors study classes of isomorphic objects in the Szymczak category within the context of the category of finite sets and relations. 

Recall that the class of isomorphic objects in the Szymczak category coincides with the class of shift equivalent objects (cf. \cite{FR2000}). Despite the fact that the concept of shift equivalence is more widely recognized, we prefer to remain within the framework of category theory, as this approach allows us to capture certain subtleties associated with the construction of the Leray functor \cite{Mr1990}. The functor is particularly convenient in applications due to its algebraic conciseness, which facilitates computational implementation. 

Returning to the issue of the necessity of enlarging the values of multivalued maps, we propose to study the situation of non-acyclic values of multivalued maps. In this case, the map induced in homology by $F$ is no longer a map but a linear relation (also called an additive relation, see \cite{ML1995}, or a multivalued linear operator, see \cite{CC2017}). Recall that for $R$-modules $A$ and $B$ over some commutative ring with unity $R$, a subset $\alpha \subseteq A\oplus B$ is a linear relation if it is a submodule of $A\oplus B$. Therefore, a linear relation may be seen as a binary relation with an additional algebraic structure given by module operation, or as a multivalued and partial homomorphism of modules. The class of all modules as objects along with all linear relations as morphisms, with the composition of relations as usual, forms a category \cite{ML1995}. 

Now, by combining these two perspectives, we can achieve the most general input for potential Conley theory without the limitations imposed by acyclic values. However, a general classification of classes of isomorphic objects in the Szymczak category for linear relations is daunting. The main difficulty lies in the multitude of different specialized structures between modules. Moreover, the classification obtained in \cite{PMW2023,AMPW2024} cannot be directly applied to the context of the Szymczak category, even for linear relations on finite modules, because this category is not a subcategory of finite sets and relations. 

Therefore, our approach is as follows. We begin with the simplest example: linear relations on finite one dimensional modules over finite fields. We present the example and the resulting statements in Section \ref{sec:zp_fields}. Next, we extend our discussion to the general case, namely linear relations on modules of finite length. In Section \ref{sec:general_case} we prove that there exists a map in every class of isomorphic objects in the Szymczak category for the full subcategory of linear relations on modules of finite length. Furthermore, this map is bijective and uniquely determined. In other words, the linear relation is shift equivalent to a bijection. Notably, we observe that these maps are obtained by applying the Leray functor \cite{Mr1990}, a convenient normal functor used prior to Szymczak’s development of the universal functor. Based on these observations, we combine the results to prove the main theorems at the end of Section \ref{sec:equivalence}, summarizing our findings. In the Appendix, we elaborate on an example that highlights the necessity of assuming the finite length of considered modules.

The question of a full classification of isomorphic objects in the Szymczak category for linear relations on arbitrary modules remains open.

%

\section{Main results}

Let $R$ be a commutative ring with unity. Modules of finite length as objects, along with linear relations as morphisms with the standard composition of relations, constitute a category $\fr{R}$ (see Sections \ref{sec:linear_relations} and \ref{sec:equivalence} for details). We consider the category of endomorphisms over $\fr{R}$, that is a category whose objects are pairs $(A,\a)$, where $A$ is an object and $\a$ is a morphism of $\fr{R}$ (see Section \ref{sec:categories_and_functors} for details). The Leray functor $L\colon\Endo\fr{R}\to\Endo\fr{R}$ is an endofunctor. Moreover, $L$ maps every endomorphism in $\fr{R}$ to an isomorphism, which is a bijective mapping (see Theorem \ref{thm:Leray is normal}). 

The category $\Szym\fr{R}$ has the same objects as $\Endo\fr{R}$, but its morphisms are defined in a more sophisticated way (see Section \ref{sec:categories_and_functors}). As mentioned in the Introduction, two isomorphic objects in the Szymczak category are shift equivalent \cite{FR2000}. Thus, the Szymczak category is the category where the concept of shift equivalence coincides with isomorphism. 

Our first main theorem states that in the class of isomorphic objects in the Szymczak category for $\fr{R}$ (equivalently, in the class of shift equivalent objects) there exists an object equipped with a bijective mapping.

\begin{theorem}
    \label{thm:Special Szymczak representative}
    $(A,\a)\iso{\Szym} L(A,\a)$ for every $(A,a)\in\Endo\fr{R}$.
\end{theorem}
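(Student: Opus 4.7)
The approach is to exhibit explicit morphisms in $\Szym\fr{R}$ between $(A,\a)$ and $L(A,\a)=(A',\a')$ and to verify that their compositions reduce to Szymczak identities. Recall that a morphism $(A,\a)\to(B,\b)$ in $\Szym\fr{R}$ is a class $[f,k]$ with $f\colon A\to B$ intertwining $\a$ and $\b$, under an equivalence that identifies $[\a^k,k]$ with the identity $[\id_A,0]$ for every $k\ge 0$, and composition is $[g,m]\circ[f,k]=[gf,k+m]$. It therefore suffices to exhibit linear relations $p\subseteq A\oplus A'$ and $q\subseteq A'\oplus A$ intertwining $\a$ with $\a'$ whose compositions are a common power $\a^n$ on $A$ and $(\a')^n$ on $A'$.

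Since $A$ has finite length, both the descending chain of generalized images and the ascending chain of generalized kernels of $\a$ stabilize, so there exists $n$ past both thresholds. For such $n$ the Leray subquotient $A'$ inherits a bijective action $\a'$ by Theorem~\ref{thm:Leray is normal}. The natural candidates are the relation $p$ given by ``apply $\a$ exactly $n$ times and project to the class in $A'$'', and the relation $q$ that lifts a class in $A'$ to any of its representatives inside $\gim\a\subseteq A$. Both are submodules of the relevant direct sums, hence linear relations in $\fr{R}$, and the intertwining conditions $p\a=\a' p$ and $\a q=q\a'$ are immediate from the definition of $\a'$ as the action induced by $\a$ on $A'$.

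Granting the identities $q\circ p=\a^n$ and $p\circ q=(\a')^n$, we obtain $[q,n]\circ[p,n]=[\a^{2n},2n]=[\id_A,0]$ in $\Szym\fr{R}$ and symmetrically $[p,n]\circ[q,n]=[\id_{A'},0]$, so $[p,n]$ and $[q,n]$ are mutually inverse Szymczak isomorphisms, which is the content of the theorem.

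The principal obstacle is the equality $q\circ p=\a^n$ on the nose rather than merely as a subrelation. For an honest map this is automatic, but the composition of linear relations involves existential quantification over $A'$, and one must check that quotienting by $\gker\a\cap\gim\a$ in the construction of $A'$ does not introduce spurious pairs: concretely, that $\a^n(a)+(\gker\a\cap\gim\a)\subseteq\a^n(a)$ for every $a\in A$. This inclusion should follow from the stabilization of the generalized kernel chain once $n$ is taken large enough, together with the fact that $\a^n(0)$ then absorbs $\gker\a$; the analogous argument on the other side handles $p\circ q=(\a')^n$. Making these two inclusions precise, using the explicit description of the Leray construction together with the finite length hypothesis, is the technical heart of the proof.
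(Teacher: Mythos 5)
Your high-level strategy coincides with the paper's --- exhibit mutually inverse Szymczak morphisms built from a large power $\a^n$ together with the canonical projection and inclusion (the paper does this in two stages, Lemmas~\ref{lemma: (A,a) is equivalent to LM(A,a)} and~\ref{lemma: (A,a) is equivalent to LE(A,a)}, for $\LM$ and $\LE$ separately, and then composes) --- but everything you defer as ``the technical heart'' is precisely what those lemmas consist of, and the hints you give for filling it in do not work as stated. First, your $q$ is a bare lift carrying no power of $\a$, so $q\circ p(a)$ consists of representatives lying in the generalized image, whereas $\a^n(a)$ need not be contained there (it lies in $\a^n(A)$ but not necessarily in $\a^{-l}(A)$); hence $q\circ p=\a^n$ fails for this $q$. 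The paper instead takes $\psi=\a^k\circ\i_A$ (resp.\ $\a^k\circ\pi_A^{-1}$), so that each morphism carries its own factor $\a^k$, the composite is $\a^{2k}$, and the degrees $[\phi,k],[\psi,k]$ match. Your bookkeeping is correspondingly inconsistent: if $q\circ p=\a^n$ then $[q,n]\circ[p,n]=[\a^{n},2n]$, which equals $[\id_A,0]$ only if $\a^{n+k}=\a^{2n+k}$ for some $k$ --- false already for multiplication by $2$ on $\Z_5$. Second, the intertwining conditions are not ``immediate'': for relations $\i_A\circ\i_A^{-1}\subsetneq\id$ and $\pi_A^{-1}\circ\pi_A\supsetneq\id$, so one inclusion in each of $p\circ\a=\a'\circ p$ and $\a\circ q=q\circ\a'$ genuinely uses the stabilization of the chains (identities (a) and (b) in each of the paper's lemmas).

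Third, the one identity you do try to reduce to a concrete inclusion is justified by a false claim, and the harder identity is not addressed at all. You assert that ``$\a^n(0)$ absorbs $\gker\a$'' for large $n$; but by~(\ref{eq:generalized kernel decomposition}) we have $\gker\a=\bigcup_{l}\a^l(0)+\bigcup_{l}\a^{-l}(0)$, and $\a^n(0)$ only ever contains the forward part. The backward part $\a^{-n}(0)$ has to be handled by pushing it one step along $\a$ and re-absorbing it into the kernel, which is exactly how the paper argues in identities (a) and (c) of Lemma~\ref{lemma: (A,a) is equivalent to LM(A,a)}. More seriously, the identity $(\a')^{2n}=p\circ q$ (the paper's identity (d) in both lemmas) is not a statement about cosets of the kernel: it requires lifting a length-$2n$ chain of the induced relation on the subquotient to an honest $\a$-chain in $A$, which the paper establishes by an explicit induction (for $\LM$) and a doubly infinite sequence argument (for $\LE$). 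Since nothing in your proposal engages with that step, what you have is a correct outline of the statement rather than a proof.
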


The next two theorems assert that in the context of the category of linear relations on modules of finite length, the Leray functor and the Szymczak functor are equivalent. 

\begin{theorem}
    \label{thm:Leray and Szymczak equivalence}
    Let $(A,a),\ (B,\b)\in\Endo\fr{R}$. Then $(A,\a)\iso{\Szym} (B,\b)$ if and only if $L(A,\a)\iso{\Endo} L(B,\b)$
\end{theorem}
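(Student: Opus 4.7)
The plan is to bootstrap Theorem \ref{thm:Leray and Szymczak equivalence} from Theorem \ref{thm:Special Szymczak representative} together with one additional observation: on the full subcategory of $\Endo\fr{R}$ whose objects carry bijective mappings as their second component, Szymczak isomorphism collapses to ordinary $\Endo$-isomorphism. Modulo this collapse, both implications are short chases through Theorem \ref{thm:Special Szymczak representative}.

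The backward direction is the cheaper one. Suppose $L(A,\a)\iso{\Endo} L(B,\b)$. The canonical functor $\Endo\fr{R}\to\Szym\fr{R}$ (identity on objects) preserves isomorphisms, so $L(A,\a)\iso{\Szym} L(B,\b)$. Chaining two instances of Theorem \ref{thm:Special Szymczak representative} then gives
\[
(A,\a)\iso{\Szym} L(A,\a)\iso{\Szym} L(B,\b)\iso{\Szym}(B,\b).
\]
For the forward direction, suppose $(A,\a)\iso{\Szym}(B,\b)$. The same chaining yields $L(A,\a)\iso{\Szym} L(B,\b)$, and it remains to upgrade this to $L(A,\a)\iso{\Endo} L(B,\b)$.

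For this upgrade, a Szymczak isomorphism unwinds as shift-equivalence data: morphisms $\varphi\colon L(A,\a)\to L(B,\b)$ and $\psi\colon L(B,\b)\to L(A,\a)$ in $\fr{R}$, together with an integer $n\geq 0$, satisfying $\varphi\circ L\a = L\b\circ\varphi$, $\psi\circ L\b = L\a\circ\psi$, $\psi\varphi=(L\a)^n$ and $\varphi\psi=(L\b)^n$. By Theorem \ref{thm:Leray is normal}, $L\a$ and $L\b$ are bijective mappings, so their $n$-th powers are invertible in $\fr{R}$. Consequently $\varphi$ admits both the left inverse $(L\b)^{-n}\psi$ and the right inverse $\psi(L\a)^{-n}$, forcing $\varphi$ to be an $\fr{R}$-isomorphism. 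Combined with the intertwining relation $\varphi\circ L\a = L\b\circ\varphi$, this exhibits $\varphi$ as the required $\Endo$-isomorphism.

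The main delicate point is the normalization step that extracts the clean shift-equivalence identities above from a raw Szymczak isomorphism: the equivalence relation defining $\Szym\fr{R}$ tolerates extra powers of $L\a$ and $L\b$ on both sides of the witnessing equations, and these powers must first be absorbed into $\varphi$ and $\psi$ using invertibility of $L\a,L\b$ before the algebra can be carried out. Once that normalization is in place, the remainder of the argument is the elementary observation that a morphism possessing both a left and a right inverse is itself an isomorphism.
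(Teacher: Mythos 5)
Your proof is correct in substance, but on the forward implication it takes a genuinely different route from the paper's. The backward implication is exactly the paper's argument: push the $\Endo$-isomorphism of $L(A,\a)$ and $L(B,\b)$ into $\Szym\fr{R}$ and chain two instances of Theorem \ref{thm:Special Szymczak representative}. For the forward implication the paper invokes the universal property of the Szymczak functor (Theorem \ref{thm:Szymczak functor is the universal normal functor}): since $L$ is normal it factors as $L'\circ\Szym$, and the functor $L'$ carries the isomorphism $(A,\a)\iso{\Szym}(B,\b)$ directly to $L(A,\a)\iso{\Endo}L(B,\b)$ in one line. You instead chain Theorem \ref{thm:Special Szymczak representative} to obtain $L(A,\a)\iso{\Szym}L(B,\b)$ and then unwind that Szymczak isomorphism by hand, cancelling the invertible powers of $L\a$ and $L\b$ supplied by Theorem \ref{thm:Leray is normal}; this is in effect a self-contained proof of Theorem \ref{thm: Szymczak on auto C is identity functor} specialized to objects in the image of $L$, whereas the paper proves that statement separately (again via universality). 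Your route buys independence from the universality theorem at the cost of redoing, inline, an argument the paper packages elsewhere; both are valid. One small slip: from $\psi\circ\varphi=(L\a)^{n}$ and $\varphi\circ\psi=(L\b)^{n}$ the left inverse of $\varphi$ is $(L\a)^{-n}\circ\psi$ and the right inverse is $\psi\circ(L\b)^{-n}$; the expressions you wrote, $(L\b)^{-n}\circ\psi$ and $\psi\circ(L\a)^{-n}$, do not typecheck as compositions. This is harmless, since all that is needed is the existence of some one-sided inverses, which your two displayed identities and the invertibility of $(L\a)^{n}$, $(L\b)^{n}$ immediately provide.
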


\begin{theorem}
    \label{thm: Szymczak on auto C is identity functor}
    Let $(A,a),\ (B,\b)\in\Endo\fr{R}$ be such that $\a$ and $\b$ are bijections. If $(A,\a)\iso{\Szym} (B,\b)$ then $(A,\a)\iso{\Endo} (B,\b)$.
\end{theorem}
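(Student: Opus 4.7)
The plan is to unpack the Szymczak isomorphism $(A,\a)\iso{\Szym}(B,\b)$ into its shift-equivalence form, and then use the assumption that $\a$ and $\b$ are bijections to twist one of the intertwining relations into a bona fide isomorphism in $\Endo\fr{R}$. I expect the Szymczak iso to yield linear relations $\phi\colon A\to B$ and $\psi\colon B\to A$, together with an integer $n\ge 0$, such that $\phi\a=\b\phi$, $\psi\b=\a\psi$, $\psi\phi=\a^n$, and $\phi\psi=\b^n$; this is the standard shift-equivalence packaging of isomorphism in the Szymczak category, already alluded to in the Introduction through the reference to \cite{FR2000}.

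Since $\b$ is bijective, $\b^n$ is invertible in $\fr{R}$, so I can set $\Phi:=\b^{-n}\circ\phi$. The intertwining with $\a$ survives, because $\b^{-n}$ commutes with $\b$:
\[\Phi\circ\a=\b^{-n}\circ\phi\circ\a=\b^{-n}\circ\b\circ\phi=\b\circ\Phi.\]
Bijectivity of $\a$ and $\b$ also promotes $\psi\b=\a\psi$ to $\psi\b^{-n}=\a^{-n}\psi$, and then direct calculation gives $\Phi\circ\psi=\b^{-n}\phi\psi=\id_B$ and $\psi\circ\Phi=\a^{-n}\psi\phi=\id_A$.

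Hence $\Phi$ admits a two-sided inverse in $\fr{R}$ and is therefore an isomorphism there. An isomorphism in the category of linear relations is necessarily a bijective linear map (a standard feature of the relation category with additive structure, and consistent with how Theorem \ref{thm:Leray is normal} is phrased in the Introduction). Because $\Phi$ further satisfies $\Phi\a=\b\Phi$, it is an isomorphism $(A,\a)\iso{\Endo}(B,\b)$ in $\Endo\fr{R}$, which is what we want.

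The main obstacle I anticipate is matching the Szymczak iso precisely with shift-equivalence data in whichever formulation the paper adopts in Section \ref{sec:categories_and_functors}. If morphisms in $\Szym\fr{R}$ are realised as equivalence classes $[\phi,n]$, then the identities $\psi\phi=\a^n$ and $\phi\psi=\b^n$ may only hold after composition with some further $\a^k$, $\b^k$; in that case one must first absorb those powers (again using invertibility of $\a$, $\b$) before defining $\Phi$. Once this bookkeeping is done, the remainder is a direct calculation riding entirely on the invertibility of $\a$ and $\b$, together with the cited characterization of isos in $\fr{R}$ as bijective linear maps.
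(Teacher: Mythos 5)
Your proof is correct, but it takes a genuinely different route from the paper's. The paper argues abstractly: by the universality of the Szymczak functor (Theorem \ref{thm:Szymczak functor is the universal normal functor}), a Szymczak isomorphism forces $L(A,\a)\iso{\Endo}L(B,\b)$, and since $\a,\b$ are bijections one has $\gker\a=0$ and $\gim\a=A$ (likewise for $\b$), so $L$ fixes both objects up to isomorphism in $\Endo\fr{R}$. You instead unpack the isomorphism into the data $\phi\a=\b\phi$, $\psi\b=\a\psi$, $\psi\phi\a^k=\a^{n+m+k}$, $\phi\psi\b^k=\b^{n+m+k}$, cancel the $\a^k$ and $\b^k$ using invertibility, and exhibit the explicit inverse pair $\Phi=\b^{-(n+m)}\phi$ and $\psi$; the computations $\Phi\a=\b\Phi$, $\Phi\psi=\id_B$, $\psi\Phi=\id_A$ all check out, the last one via $\psi\b^{-(n+m)}=\a^{-(n+m)}\psi$, which is legitimate because $\b\circ\b^{-1}=\id_B$ and $\a^{-1}\circ\a=\id_A$ hold as genuine equalities for bijections (they need not for general relations). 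The comparison: the paper's proof is shorter and reuses machinery already built, but depends on the Leray functor; yours is self-contained, needs only the definition of $\Szym$ and the invertibility of $\a$ and $\b$, and in particular never invokes the finite-length hypothesis, so it establishes the statement in all of $\Endo\r{R}$. One cosmetic point: you do not actually need the observation that isomorphisms in $\fr{R}$ are bijective linear maps --- producing the mutually inverse morphisms $\Phi$ and $\psi$ in $\Endo\fr{R}$ already gives $(A,\a)\iso{\Endo}(B,\b)$.
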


Thus every class of objects isomorphic under the Szymczak functor contains a canonical object $L(A, \a)$, which is unique up to isomorphism in $\Endo \fr{R}$, where $(A,\a)$ is any object in the class.

%

\section{Linear relations}
\label{sec:linear_relations}

Let $M,N$ be $R$-modules. A subset $\phi\subseteq M\oplus N$ is called a \emph{linear relation} if it is a submodule of $M\oplus N$. 
To emphasize the fact that one may think of $\phi$ as a partial multivalued map,
we will also write $n\in\phi(m)$ or $\apply{\phi}{n}{m}$ as an alternative to $(m,n)\in\phi$
and denote $\phi\colon M\mto N$ instead of $\phi\subseteq M\oplus N$.
Indeed, for $m\in M$ we understand 
\begin{align}
        \phi(m) &= \{n \in N:\: (m,n)\in\phi\},
\end{align}
the \emph{image of $m$ by $\phi$}. Modules $M$ and $N$ are called the \emph{domain} and \emph{codomain} of $\phi$ respectively.

Given linear relations $\phi\colon M \mto N$ and $\psi\colon N \mto P$ we define the \emph{relation composition} (denoted by $\psi \circ \phi$) as follows:
    \begin{equation}
    \label{eq:composition}
        \psi \circ \phi = \{(m,p) \in M \oplus P:\: \exists{n \in N}\:\  (m,n)\in\phi,\: (n,p)\in\psi \}.
    \end{equation}
By the \emph{inverse relation} of $\phi$ we mean a relation $\phi^{-1}\colon N \mto M$ such that $(n,m)\in\phi^{-1}$ provided $(m,n)\in\phi$. Note that the inverse of a linear relation and the composition of linear relations are still linear, moreover $(\psi\circ\phi)^{-1}=\phi^{-1}\circ\psi^{-1}$ for any linear relations $\phi, \psi$. For $n \in N$ we define
    \begin{align}
        \phi^{-1}(n) &= \{m \in M:\: (m,n)\in\phi\}, 
    \end{align}
the \emph{preimage of $n$ by $\phi$}. Note that the image and preimage might be empty. Observe that taking the preimage by a relation is the same as taking the image by its inverse. Given a subset of $N$ we define its image by the union of images of its elements, similarly for preimage. If $N = M$, we will write $\phi^k$ ($k \in \N$) for $k$ compositions of $\phi$ with itself and $\phi^{-k}$ for $(\phi^{-1})^k$. 

We say that a linear relation $\phi\colon M \mto N$ is \emph{single-valued} if $\card\phi(m) \leq 1$ for $m \in M$, \emph{total} if $\card\phi(m) \geq 1$ for $m \in M$. Moreover, we say that is is \emph{injective} or \emph{surjective}, if $\phi^{-1}$ is respectively single-valued or total. Finally, we say that it is a \emph{matching} if it is single-valued and injective and \emph{correspondence} if it is total and surjective. Note that a linear relation is an module isomorphism if and only if it is a matching and a correspondence. Similarly as in case of module homomorphisms, we obtain a characterization:

\begin{prop}
\label{prop:matching characterization}
Let $\phi\colon A\mto B$ be a linear relation. Then $\phi$ is single-valued if and only if $\phi(0)=\{0\}$ and $\phi$ is injective if and only if $\phi^{-1}(0)=\{0\}$.
\end{prop}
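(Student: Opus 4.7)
The plan is to derive both equivalences directly from the fact that $\phi \subseteq A \oplus B$ is a submodule, using the second equivalence as a formal consequence of the first applied to $\phi^{-1}$.

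First I would observe the basic fact that $(0,0) \in \phi$ since $\phi$ is a submodule, so $0 \in \phi(0)$ always holds. This makes the implication single-valued $\Rightarrow \phi(0) = \{0\}$ immediate: if $\card \phi(0) \leq 1$ and $0 \in \phi(0)$, then $\phi(0) = \{0\}$.

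For the reverse implication, the key step is to exploit the submodule structure. Suppose $n_1, n_2 \in \phi(m)$, so $(m,n_1), (m,n_2) \in \phi$. Since $\phi$ is a submodule of $A \oplus B$, the difference $(m,n_1) - (m,n_2) = (0, n_1 - n_2)$ lies in $\phi$, hence $n_1 - n_2 \in \phi(0)$. Under the assumption $\phi(0) = \{0\}$, this forces $n_1 = n_2$, proving $\phi$ is single-valued.

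For the injective case, I would note that the inverse $\phi^{-1}\colon B \mto A$ is again a linear relation (as mentioned in the excerpt, the inverse of a linear relation is linear), and by definition $\phi$ is injective if and only if $\phi^{-1}$ is single-valued. Applying the first equivalence to $\phi^{-1}$ then yields: $\phi$ is injective if and only if $\phi^{-1}(0) = \{0\}$. There is no real obstacle in this proof; the entire content is the submodule closure under subtraction, and both statements are two sides of the same elementary observation.
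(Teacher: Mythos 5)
Your proof is correct and follows essentially the same route as the paper: use $(0,0)\in\phi$ for the forward direction, and submodule closure under subtraction to get $(0,n_1-n_2)\in\phi$ for the converse. The only cosmetic difference is that you explicitly reduce the injectivity statement to the single-valuedness of $\phi^{-1}$, where the paper simply notes the second proof is similar.
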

\begin{proof}
Assume that $\phi$ is single-valued. Since always $\apply{\phi}{0}{0}$, we deduce that $\phi(0)=\{0\}$. Now if $\phi(0)=\{0\}$, we want to prove that it is single-valued. Let $x\in A$, $y,y'\in B$ be such that $\apply{\phi}{x}{y}$ and $\apply{\phi}{x}{y'}$. Then $\apply{\phi}{0}{(y-y')}$, thus $y-y'=0$ and $\phi$ is single valued. The proof of the second statement is similar.
\end{proof}

\begin{definition}
\label{defn:generalized}
    Let $\a\colon A\mto A$ be a linear relation. We define the \emph{generalized kernel of $\a$} (denoted by $\gker\a$) as 
    \begin{equation}
        \gker\a:=\left\langle\bigcup_{l\in\Z}\a^{l}(0)\right\rangle,
    \end{equation}
    where by $\langle S\rangle$ we understand a module spanned by subset $S\subseteq M$. We also define the \emph{generalized image of $\a$} (denoted by $\gim\a$) as 
    \begin{equation}
        \gim\a:=\bigcap_{l\in\Z}\a^l(A).
    \end{equation}
\end{definition} 

Since $\a^{l}(0)\subseteq\a^{l+1}(0)$ and $\a^{-l}(0)\subseteq\a^{-l-1}(0)$, observe that 
\begin{equation}
    \label{eq:generalized kernel decomposition}
    \gker\a=\bigcup_{l\in\N}\a^l(0)+\bigcup_{l\in\N}\a^{-l}(0).
\end{equation} Note that the generalized kernel and the generalized image are submodules of $A$.

\begin{prop}
    \label{prop:gim and gker inclusions}
    Let $A$ be an $R$-module and $\a\colon A\mto A$ be a linear relation. Then the folowing inclusions hold:
\begin{align*}
   \tag{a}\label{gim and gker inclusions 3}
   \a(\gker\a)&\subseteq\gker\a,\\
   \tag{b}\label{gim and gker inclusions 4}
   \a^{-1}(\gker\a)&\subseteq\gker\a.
\end{align*}
Furthermore, if $\a$ is a matching, then additionally
\begin{align*}
    \tag{c}\label{gim and gker inclusions 1}
    \gim\a&\subseteq\a(\gim\a),\\
    \tag{d}\label{gim and gker inclusions 2}
    \gim\a&\subseteq\a^{-1}(\gim\a).
\end{align*}
\end{prop}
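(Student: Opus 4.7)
The plan is to prove (a) directly from the decomposition (\ref{eq:generalized kernel decomposition}), and then derive (b) by applying (a) to $\a^{-1}$ after observing that $\gker(\a^{-1})=\gker\a$. Analogously, I would prove (c) directly and obtain (d) by applying (c) to $\a^{-1}$, using that $\gim(\a^{-1})=\gim\a$ and that $\a^{-1}$ is a matching whenever $\a$ is (immediate from Proposition \ref{prop:matching characterization}, since both characterising conditions are symmetric under inversion).

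For (a), I would fix $x\in\gker\a$ and $y\in\a(x)$, and use (\ref{eq:generalized kernel decomposition}) to write $x=u+v$ with $u\in\a^n(0)$ and $v\in\a^{-m}(0)$ for some $n,m\ge 0$. The key trick is to peel off the first step of a chain witnessing $v\in\a^{-m}(0)$: picking such a $v_1\in\a(v)$ with $v_1\in\a^{-(m-1)}(0)$ (or $v_1=0$ when $m=0$), the submodule property of $\a\subseteq A\oplus A$ gives $(u,y-v_1)=(u+v,y)-(v,v_1)\in\a$, so $y-v_1\in\a(u)\subseteq\a^{n+1}(0)$. Since both $v_1$ and $y-v_1$ then lie in $\gker\a$, so does $y$.

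For (c), I would fix $y\in\gim\a$. Because $y\in\a(A)$, there exists $x_0$ with $(x_0,y)\in\a$, and this $x_0$ is unique by the injectivity half of the matching assumption. It remains to verify $x_0\in\a^l(A)$ for every $l\in\Z$. For $l\ge 1$ the condition $y\in\a^{l+1}(A)$ provides a chain ending in $y$ whose penultimate entry must equal $x_0$ by uniqueness of the $\a$-preimage, placing $x_0\in\a^l(A)$. For $l\le -1$ the condition $y\in\a^{-(|l|-1)}(A)$ gives $\a^{|l|-1}(y)\ne\emptyset$, and prepending the arrow $(x_0,y)\in\a$ to any chain realising this produces a chain of length $|l|$ starting at $x_0$, so $x_0\in\a^{-|l|}(A)$.

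The subtle point behind these choices is that a linear relation need not distribute over sums (one only has $\a(x)+\a(x')\subseteq\a(x+x')$) and $\a(u)$ can be empty for individual $u\in\a^n(0)$. Instead of trying to split $\a(x)$ as a sum of images, both arguments exploit the submodule structure of $\a$ to subtract a concrete known pair from $(x,y)$, with the necessary witness obtained from an explicit chain through $0$ in (a) and from the unique $\a$-preimage of $y$ in (c).
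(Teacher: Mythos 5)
Your proposal is correct, and for parts (a) and (b) it is essentially identical to the paper's argument: the same peeling-off of one step of the chain witnessing $v\in\a^{-m}(0)$, the same use of the submodule structure of $\a$ to subtract the pair $(v,v_1)$, and the same reduction of (b) to (a) via $\gker\a^{-1}=\gker\a$. For (c) and (d) the underlying idea also matches --- the matching hypothesis forces the intermediate points of chains through an element of $\gim\a$ to coincide --- but the organization differs slightly. The paper constructs, by induction, a full bi-infinite orbit $\{x_n\}_{n\in\Z}$ through $x$ (using single-valuedness to extend forward and injectivity to extend backward) and reads off both (c) and (d) at once from $x_{-1},x_1\in\gim\a$. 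You instead prove (c) directly by showing that the unique $\a$-preimage $x_0$ of $y$ lies in every $\a^l(A)$, and then deduce (d) by applying (c) to $\a^{-1}$. Your version is marginally more economical (no induction, no bi-infinite sequence) and makes visible that (c) really only needs injectivity while (d) only needs single-valuedness; the paper's version has the small advantage of exhibiting the full orbit explicitly, which is the object whose non-existence drives the counterexample in the Appendix. Both arguments are complete and correct.
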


\begin{proof}
To prove (\ref{gim and gker inclusions 3}) take $y\in\a(\gker\a)$. Let $x\in\gker\a$ be such that $\apply{\a}{x}{y}$. By (\ref{eq:generalized kernel decomposition}), there exist $n\in\a^k(0)$ and $n'\in\a^{-k'}(0)$ such that $x=n+n'$ for some $k,k'\in\N$. Since $n'\in\a^{-k'}(0)$, there exists $m\in\a^{-k'+1}(0)$ such that $\apply{\a}{n'}{m}$. From the linearity of $\a$ we obtain $\apply{\a}{n}{(y-m)}$, hence $\apply{\a^{k+1}}{0}{(y-m)}$ and consequently $y-m\in\gker\a$. Since $m\in\gker\a$, we deduce that $y\in\gker\a$. Notice that $\gker\a=\gker\a^{-1}$, thus (\ref{gim and gker inclusions 4}) follows from (\ref{gim and gker inclusions 3}) applied to $\a^{-1}$.

To prove (\ref{gim and gker inclusions 1}), assuming that $\a$ is a matching, take $x\in\gim\a$. We will construct inductively a sequence $\{x_n\}_{n\in\Z}$ such that $x_0=x$ and $\apply{\a}{x_n}{x_{n+1}}$ for $n\in\Z$. Then $x_1, x_{-1}\in\gim\a$, thus the desired equalities are satisfied. First, $x_0=x$. Assume that for $k\in\N$ we have constructed a sequence $\{x_n\}_{n=-k}^{n=k}$ such that $x_0=x$ and $\apply{\a}{x_n}{x_{n+1}}$ for $n=-k,-k+1\ldots k-1$. Since $x\in\gim\a$, there exists $y\in A$ such that $\apply{\a^{k+1}}{x}{y}$. Let $z\in A$ be such that $\apply{\a^k}{x}{z}$ and $\apply{\a}{z}{y}$. Since $\a$ is a matching, we deduce that $z=x_k$. Thus we set $x_{k+1}=y$. Constructing $x_{-k-1}$ analogously, we prove the induction step.
\end{proof}

Suprisingly, the assumption that $\a$ is a matching is essential. The counterexample, being tedious to analyze, is presented in the Appendix. Nevertheless, the second part of this proposition still remains true for $\a$ not being a matching, if we assume that $A$ is of finite length. This will be adressed in the latter sections of the paper (see Definition \ref{defn:finite length}).

%

\section{Categories and functors}
\label{sec:categories_and_functors}

Let $\C$ be a category. By $\Endo\cC$ we mean the category, whose objects are pairs $(A, \a)$, where $A\in\C$ and $\a\colon A\to A$ is an endomorphism in $\C$. The hom-set $\Endo\cC(A,B)$ consists of morphisms $\phi\in\C(A,B)$ for which the diagram
$$
\begin{diagram}
 \node{A}
 \arrow{s,l}{\varphi}
 \arrow{e,t}{\alpha}
 \node{A}
 \arrow{s,r}{\varphi}
 \\
 \node{B}
 \arrow{e,t}{ \beta}
 \node{B}
\end{diagram}
$$
commutes. Sometimes we will treat the pair $(A,\a)$ as just the endomorphism, since $A$ is determined as a source of $\a$. Notice that if $(A, \a)\in\Endo\cC$, then obviously ${\a\in\Endo\cC((A,\a), (A,\a))}$.

\begin{definition}
    Let $\C, \D$ be categories. We say that the functor $F\colon\Endo\cC\to\cD$ is \emph{normal} if $F(\a)$ is an isomorphism in $\D$ for every $(A,\a)$ in $\C$.   
\end{definition}

Normal functors were introduced by A. Szymczak in \cite{Sz1995}, but the idea had already been present in \cite{Mr1990}. They are used to establish the definition of the Conley index independent of the choice of the index pair.

\subsection{Szymczak functor}
The Szymczak functor is the universal normal functor - for any categories $\C, \D$ every normal functor $F\colon\Endo\cC\to\cD$ factors through the Szymczak functor (see Theorem \ref{thm:Szymczak functor is the universal normal functor}). A detailed construction of the Szymczak functor can be found in \cite{Sz1995}.

Given a category $\C$, in order to construct the Szymczak category $\Szym\cC$ we want to modify the $\Endo\cC$ in such a way that every endomorphism of the form $\a\colon(A,\a)\to(A,\a)$ becomes an isomorphism. Objects of $\Szym\cC$ are the objects of $\Endo\cC$. For objects $(A,\a),\ (B,\b)\in\Endo\cC$ we define a relation
\begin{equation}
    \label{eq:Szymczak equivalence relation}
    (\phi,n)\equiv(\psi,m)\Longleftrightarrow\exists k\in\NN: \phi\circ\a^{m+k}=\psi\circ\a^{n+k}
\end{equation}
for $(\phi,n), (\psi,m)\in\Endo\C((A,\a),(B,\b))\times\NN$. The fact that $\equiv$ is an equivalence relations follows from $\phi$ and $\psi$ being morphisms in $\Endo\C$. We define morphisms $\Szym\C((A,\a),(B,\b))$ to be all the equivalence classes of $\equiv$. Given morphisms ${[\phi,n]\colon (A,\a)\to(B,\b)}$ and ${[\psi,m]\colon(B,\b)\to(C,\gamma)}$ in $\Szym\cC$, we define their composition by the equation
\begin{equation}
    \label{eq:Szymczak morphism composition}
    [\psi,m]\circ[\phi,n]:=[\psi\circ\phi,m+n].
\end{equation}
The identity morphism of $(A,\a)$ is defined 
as $[\id_A,0]$. 
It is easy to check that the composition defined above is well-defined and satifies all properties of composition of morphisms. Thus $\Szym\cC$ forms a category. We also define the Szymczak functor as follows: for 
$(A,\a)\in\Endo\cC$ we set $\Szym(A,\a):=(A,\a)$. For a morphism $\phi$ in $\Endo\C$ we set $\Szym(\phi):=[\phi,0]$. Note that the inverse of $[\a,0]$ is given by $[\id_A,1]$. Indeed
$$
    [\id_A,1]\circ[\a,0]=[\a,1]=[\id_A,0],
$$
and the proof of the converse equality is similar. Hence, ${\Szym\colon\Endo\cC\to\Szym\cC}$ is a normal functor.

\begin{theorem}[{\cite[Theorem 6.1]{Sz1995}}]
    \label{thm:Szymczak functor is the universal normal functor}
    Let $\C$ be a category. The Szymczak functor ${\Szym\colon\Endo\cC\to\Szym{\C}}$ is normal. Moreover, for any category $\D$ and any normal functor ${F\colon\Endo\cC\to\D}$ there exists a unique up to equivalence functor ${F'\colon\Szym{\C}\to\D}$ such that the diagram 
    \begin{center}
    \begin{tikzcd}
        \Endo\cC \arrow[r, "F"] \arrow[d, "\Szym"'] & \cD \\
        \Szym{\cC} \arrow[ru, "\exists!\ F'"', dotted] &   
    \end{tikzcd}
    \end{center}
    commutes. 
\end{theorem}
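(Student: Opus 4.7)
The plan is to verify the two parts of the statement separately: first confirming that $\Szym$ itself is normal (which is essentially already observed in the text preceding the theorem), and then constructing the factoring functor $F'$ from a given normal $F$, checking well-definedness, functoriality, commutativity, and uniqueness.

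For normality of $\Szym$, it suffices to exhibit a two-sided inverse of $[\a,0]$ in $\Szym\C$. The candidate $[\id_A,1]$ works by the computation already displayed:
\begin{equation*}
    [\id_A,1]\circ[\a,0] = [\a,1] = [\id_A,0],
\end{equation*}
using that $(\a,1)\equiv(\id_A,0)$ because $\a\circ\a^{0+1}=\id_A\circ\a^{1+1}$, and symmetrically for the other composition.

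The heart of the proof is the construction of $F'$. On objects I would set $F'(A,\a):=F(A,\a)$, forced by commutativity with $\Szym$. On morphisms, given a representative $(\phi,n)$ of a class in $\Szym\C((A,\a),(B,\b))$, I would define
\begin{equation*}
    F'([\phi,n]) := F(\b)^{-n}\circ F(\phi) = F(\phi)\circ F(\a)^{-n},
\end{equation*}
where the two expressions agree because $\phi$ is a morphism in $\Endo\C$ (so $F(\b)\circ F(\phi)=F(\phi)\circ F(\a)$) and both $F(\a)$ and $F(\b)$ are invertible by normality of $F$. Well-definedness on $\equiv$-classes is the key calculation: if $\phi\circ\a^{m+k}=\psi\circ\a^{n+k}$, applying $F$ and right-composing with $F(\a)^{-(n+m+k)}$ yields $F(\phi)\circ F(\a)^{-n}=F(\psi)\circ F(\a)^{-m}$, exactly the required identity. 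Functoriality on identities is immediate, and for composition one checks
\begin{equation*}
    F'([\psi,m])\circ F'([\phi,n]) = F(\c)^{-m}\circ F(\psi)\circ F(\b)^{-n}\circ F(\phi),
\end{equation*}
and then slides $F(\b)^{-n}$ past $F(\psi)$ using the Endo-condition $F(\psi)\circ F(\b)=F(\c)\circ F(\psi)$ (hence $F(\b)^{-n}$ commutes appropriately with $F(\psi)$ up to $F(\c)^{-n}$) to land on $F(\c)^{-(m+n)}\circ F(\psi\circ\phi) = F'([\psi\circ\phi,m+n])$. The triangle $F = F'\circ \Szym$ commutes by construction, since $F'(\Szym\phi) = F'([\phi,0]) = F(\phi)$.

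For uniqueness up to equivalence, I would note that in $\Szym\C$ one has $[\phi,n] = [\phi,0]\circ[\id_A,n]$ and $[\id_A,n] = ([\a,0]^{n})^{-1} = (\Szym\a)^{-n}$, so any functor $F''$ satisfying $F''\circ\Szym = F$ is forced to send $[\phi,n]$ to $F(\phi)\circ F(\a)^{-n}$, matching $F'$. The main obstacle I expect is not any single calculation but keeping the two-sided nature of the formula for $F'([\phi,n])$ consistent throughout the composition check; normality of $F$ has to be invoked at exactly the right places to move inverse powers of $F(\a),F(\b),F(\c)$ across Endo-morphisms, and any sloppiness about left- versus right-composition will make the associativity check fail.
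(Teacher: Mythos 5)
Your argument is correct, but note that the paper does not prove this statement at all: it is imported verbatim as \cite[Theorem~6.1]{Sz1995}, so there is no in-paper proof to compare against. What you have written is the standard, complete verification of the universal property --- the forced definition $F'([\phi,n]) := F(\phi)\circ F(\a)^{-n}$, well-definedness on $\equiv$-classes, functoriality via sliding inverse powers across the $\Endo$-intertwining relations, and uniqueness from $[\phi,n]=[\phi,0]\circ(\Szym\a)^{-n}$ --- and all the index bookkeeping checks out against the paper's conventions (\ref{eq:Szymczak equivalence relation}) and (\ref{eq:Szymczak morphism composition}); in fact you obtain strict uniqueness of $F'$, which is stronger than the stated ``unique up to equivalence.''
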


\subsection{Leray functor}

In this part $R$ will be a commutative ring with unity. Recall that the Leray functor used in \cite{Mr1990} is a normal functor defined on the standard category of $R$-modules and their homomorphisms. Our goal is to extend its definition to $\r{R}$, that is, a category whose objects are $R$-modules and whose morphisms are linear relations with the standard composition of relations.  

For $(A,\a)\in\Endo\r{R}$ we write ${\i_A\colon\gim\a\ni a\mapsto a\in A}$ for the canonical inclusion and ${\pi_A\colon A\ni a\mapsto [a]\in A/\gker\a}$ for the canonical projection. Then we define
$$
\begin{array}{rcl}
\LE(A,\a) & := & (\gim{\a}, \i_A^{-1}\circ\a\circ \i_A), \\
\LM(A,\a) & := & (A/\gker{\a}, \pi_A\circ\a\circ\pi_A^{-1}). \\
\end{array}
$$
If $(B,\b)$ is another object in $\Endo\r{R}$ and $\phi\colon(A,\a)\mto(B,\b)$, we define ${\LE(\phi):=(\i_B^{-1}\circ\phi\circ \i_A)}$ and ${\LM(\phi):=(\pi_B\circ\phi\circ\pi_A^{-1})}$. Finally, we define the \emph{Leray functor} as the composition 
\begin{equation}
\label{eq:Leray functor}
L:=\LE\circ\LM. 
\end{equation}
Having all the necessary notions defined, we can state the main theorem of this subsection.
\begin{theorem}
    \label{thm:Leray is normal}
    Leray functor $L\colon\Endo\r{R}\to\Endo\r{R}$ is a normal functor.
\end{theorem}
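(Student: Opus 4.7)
The plan is to show that $L(\a)$, viewed as a linear relation on $\gim\LM(A,\a)$, is simultaneously a matching and a correspondence. A linear relation that is single-valued, injective, total, and surjective is an isomorphism of modules, and since $L(\a)$ is the functorial image of the morphism $\a\colon(A,\a)\to(A,\a)$, the standard fact that an inverse of an intertwining isomorphism is also intertwining lifts this to an isomorphism in $\Endo\r{R}$, which is exactly what normality demands.

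The first step is to show that $\LM(\a)=\pi_A\circ\a\circ\pi_A^{-1}$ is a matching on $A/\gker\a$. By Proposition~\ref{prop:matching characterization}, this reduces to verifying $\LM(\a)(0)=\{0\}$ and $\LM(\a)^{-1}(0)=\{0\}$. Unfolding the composition, $[y]\in\LM(\a)(0)$ precisely when there exists $x\in\pi_A^{-1}(0)=\gker\a$ with $y\in\a(x)$, so $\LM(\a)(0)=\pi_A(\a(\gker\a))$. Proposition~\ref{prop:gim and gker inclusions}(a) gives $\a(\gker\a)\subseteq\gker\a$, whence $\pi_A(\a(\gker\a))=\{0\}$. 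The identity $\LM(\a)^{-1}(0)=\{0\}$ follows symmetrically from part (b).

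With $\LM(\a)$ now known to be a matching, I can apply the remaining parts of Proposition~\ref{prop:gim and gker inclusions} to $\LM(\a)$ itself: parts (c) and (d) yield $\gim\LM(\a)\subseteq\LM(\a)(\gim\LM(\a))$ and $\gim\LM(\a)\subseteq\LM(\a)^{-1}(\gim\LM(\a))$. These inclusions state that every element of $\gim\LM(\a)$ admits both an image and a preimage under $\LM(\a)$ lying again in $\gim\LM(\a)$, so the restriction $L(\a)=\i^{-1}\circ\LM(\a)\circ\i$ is total and surjective, i.e., a correspondence. Since restriction to submodules of the domain and codomain preserves single-valuedness and injectivity, $L(\a)$ remains a matching, and is thus an isomorphism on $\gim\LM(A,\a)$.

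The main obstacle I expect is the bookkeeping in the first step: the composition $\pi_A\circ\a\circ\pi_A^{-1}$ involves the genuinely multivalued relation $\pi_A^{-1}$, so one has to track carefully which cosets appear in $\LM(\a)(0)$ and how the $\a$-invariance of $\gker\a$ collapses them to $\{0\}$ in the quotient. After this is pinned down, the proof is essentially a structural chaining of Proposition~\ref{prop:matching characterization} with the four parts of Proposition~\ref{prop:gim and gker inclusions}: parts (a), (b) give matching for $\LM(\a)$, and then parts (c), (d) apply to this matching to give the correspondence property for $L(\a)$.
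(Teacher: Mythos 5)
Your proposal is correct and follows essentially the same route as the paper: first establish that $\LM(\a)$ is a matching via Proposition \ref{prop:matching characterization} together with parts (a) and (b) of Proposition \ref{prop:gim and gker inclusions}, then apply parts (c) and (d) to the matching $\LM(\a)$ to conclude that $\LE(\LM(A,\a))$ is both a matching and a correspondence. The only cosmetic difference is that you make explicit the (standard) step that a matching-and-correspondence intertwiner is invertible in $\Endo\r{R}$, which the paper leaves implicit.
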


\begin{prop}
    \label{prop:Leray on morphisms}
    Let $\phi\colon(A,\a)\mto(B,\b)$ be a morphism in $\Endo\r{R}$. Then $\LE(\phi)\colon \LE(A,\a)\mto\LE(B,\b)$ and $\LM(\phi)\colon\LM(A,\a)\mto\LM(B,\b)$ are morphisms in $\Endo\r{R}$.
\end{prop}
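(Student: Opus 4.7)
My plan is to verify, for each of $\LE(\phi)$ and $\LM(\phi)$, that it is a well-defined linear relation between the stated modules and that it intertwines the prescribed endomorphisms. The first property is immediate: each is a composition of linear relations with canonical inclusions or projections, hence a submodule of the target direct sum. The substance lies in the intertwining, and the common tool I would use is the iterated commutativity $\phi\circ\a^{l}=\b^{l}\circ\phi$ for $l\geq 0$, proved by induction from $\phi\circ\a=\b\circ\phi$.

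For $\LE$, the relations take the clean form $\LE(\phi)=\phi\cap(\gim\a\oplus\gim\b)$ and analogously for $\LE(\a),\LE(\b)$. Both sides of the desired equality $\LE(\phi)\circ\LE(\a)=\LE(\b)\circ\LE(\phi)$ are subsets of $(\phi\circ\a)\cap(\gim\a\oplus\gim\b)$, so I only need to show that the intermediate witness can be chosen in the appropriate generalized image. Given $(x,y)$ in this intersection and any $z\in A$ with $(x,z)\in\a$ and $(z,y)\in\phi$, I would argue $z\in\gim\a$ as follows: for $l\geq 0$ the inclusion $z\in\a^{l}(A)$ follows from $x\in\a^{l-1}(A)$, and for $l\geq 1$ the computation $\phi(\a^{l}(z))=\b^{l}(\phi(z))\supseteq\b^{l}(y)\neq\emptyset$ (using $y\in\gim\b\subseteq\b^{-l}(B)$) forces $\a^{l}(z)\neq\emptyset$, i.e.\ $z\in\a^{-l}(A)$. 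A dual argument supplies the intermediate $w\in\gim\b$ needed for the right-hand side.

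For $\LM$, it suffices to prove the stronger functorial identities $\LM(\phi)\circ\LM(\a)=\LM(\phi\circ\a)$ and $\LM(\b)\circ\LM(\phi)=\LM(\b\circ\phi)$, since $\phi\circ\a=\b\circ\phi$. The inclusion $\supseteq$ is routine. For $\subseteq$, starting from $(x_{0},z_{0})\in\a$ and $(z_{0}+k,y_{0})\in\phi$ with $k\in\gker\a$, I would decompose $k=k_{+}+k_{-}$ via \eqref{eq:generalized kernel decomposition} and absorb each summand separately. The chain witnessing $k_{+}\in\a^{l}(0)$ supplies $k_{1}\in\a^{l-1}(0)\subseteq\gker\a$ with $(k_{1},k_{+})\in\a$, hence $(x_{0}+k_{1},z_{0}+k_{+})\in\a$. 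The chain witnessing $k_{-}\in\a^{-l}(0)$ (from $k_{-}$ down to $0$), together with $\phi\circ\a=\b\circ\phi$ and Proposition~\ref{prop:gim and gker inclusions}(b) applied to $\b$, yields by induction an element $\xi\in\phi(k_{-})\cap\gker\b$; thus $(z_{0}+k_{+},y_{0}-\xi)\in\phi$. Concatenating, $(x_{0}+k_{1},y_{0}-\xi)\in\phi\circ\a$, and this projects to $([x],[y])$.

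The main obstacle I anticipate is the $\LM$ step: the naive strategy of replacing $(x_{0},z_{0})\in\a$ by $(x_{0}+k_{1},z_{0}+k)\in\a$ fails because $\a(\gker\a)$ can be a strict submodule of $\gker\a$. Splitting $k$ along the forward and backward parts of \eqref{eq:generalized kernel decomposition} and handling them with $\a$- and $\phi$-modifications respectively is what makes the argument close.
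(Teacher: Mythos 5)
Your proposal is correct and follows essentially the same route as the paper: reduce each intertwining identity to showing that the intermediate witness can be chosen in $\gim$ (for $\LE$, using forward iterates of the source point and pushing backward iterates of the target through $\phi\circ\a^l=\b^l\circ\phi$) respectively to absorbing a $\gker$-element split via \eqref{eq:generalized kernel decomposition} into the $\a$-step and the $\phi$-step (for $\LM$). Your step-by-step induction producing $\xi\in\phi(k_-)\cap\gker\b$ is only a cosmetic variant of the paper's one-shot use of $\phi\circ\a^{k}=\b^{k}\circ\phi$, so no substantive difference remains.
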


\begin{proof}
    We have to check that $\LE(\phi)$ commutes with endomorphisms of $\LE(A,\a)$ and $\LE(B,\b)$, i.e. 
$$
    (\i_B^{-1}\circ\phi\circ\i_A)\circ(\i_A^{-1}\circ\a\circ\i_A)=(\i_B^{-1}\circ\b\circ\i_B)\circ(\i_B^{-1}\circ\phi\circ\i_A).
$$
It suffices to show that the equations
\begin{align}
    \tag{a}\label{eq:Leray on morphisms helper a}(\i_B^{-1}\circ\phi\circ\i_A)\circ(\i_A^{-1}\circ\a\circ\i_A)&=\i_B^{-1}\circ\phi\circ\a\circ\i_A\\
    \tag{b}(\i_B^{-1}\circ\b\circ\i_B)\circ(\i_B^{-1}\circ\phi\circ\i_A)&=\i_B^{-1}\circ\b\circ\phi\circ\i_A
\end{align}
hold, since $\b\circ\phi=\phi\circ\a$. Since the proofs are similar, we will only prove (\ref{eq:Leray on morphisms helper a}). Notice that $\i_A\circ\i_A^{-1}\subseteq\id_A$, which proves left-to-right inclusion in the equality. In order to prove the opposite one, take $a\in\gim\a$ and $b\in\gim\b$ such that $\apply{(\i_B^{-1}\circ\phi\circ\a\circ\i_A)}{a}{b}$. There exists $c\in A$ such that $\apply{(\a\circ\i_A)}{a}{c}$ and $\apply{(\i_B^{-1}\circ\phi)}{x}{a}$. In order to finish the proof, it is sufficient to show that $x\in\gim\a$. Fix $k\in\N_+$. Since $a\in\gim\a$ and $b\in\gim\b$, there exist $a'\in A$ and $b'\in B$ such that $\apply{\a^{k-1}}{a'}{a}$ and $\apply{\b^k}{b}{b'}$. Note that $\apply{(\b^k\circ\phi)}{x}{b'}$, thus $\apply{(\phi\circ\a^k)}{x}{b'}$. Therefore, there exists $y\in A$ such that $\apply{\a^k}{x}{y}$. Thus $x\in\a^k(a')\cap\a^{-k}(y)\subseteq\a^k(A)\cap\a^{-k}(A)$. Since $k$ was arbitrary, we deduce that $x\in\gim\a$.

Similarly, in order to prove that $\LM(\phi)$ commutes with endomorphisms of $\LM(A,\a)$ and $\LM(B,\b)$, i.e
$$
    (\pi_B\circ\phi\circ\pi_A^{-1})\circ(\pi_A\circ\a\circ\pi_A^{-1})=(\pi_B\circ\b\circ\pi_B^{-1})\circ(\pi_B\circ\phi\circ\pi_A^{-1}),
$$
it suffices to show that the equations
\begin{align}
    \tag{c}\label{eq:Leray on morphisms helper c}(\pi_B\circ\phi\circ\pi_A^{-1})\circ(\pi_A\circ\a\circ\pi_A^{-1})&=\pi_B\circ\phi\circ\a\circ\pi_A^{-1}\\
    \tag{d}(\pi_B\circ\b\circ\pi_B^{-1})\circ(\pi_B\circ\phi\circ\pi_A^{-1})&=\pi_B\circ\b\circ\phi\circ\pi_A^{-1}
\end{align}
hold, since $\b\circ\phi=\phi\circ\a$. Again, since the proofs are similar, we will only prove (\ref{eq:Leray on morphisms helper c}). Notice that $\pi^{-1}_A\circ\pi_A\supseteq\id_A$, which proves the right-to-left inclusion in the equality. In order to prove the other one, take $a\in A$ and $b\in B$ such that $\apply{(\pi_B\circ\phi\circ\pi_A^{-1}\circ\pi_A\circ\a\circ\pi_A^{-1})}{[a]}{[b]}$. Without loss of generality, $a$ and $b$ are chosen in such a way that $\apply{(\phi\circ\pi_A^{-1}\circ\pi_A\circ\a)}{a}{b}$. There exist $x,\ x'\in A$ such that $\apply{\a}{a}{x}$, $\apply{(\pi_A^{-1}\circ\pi_A)}{x}{x'}$ and $\apply{\phi}{x'}{b}$. It means that $x-x'\in\gker\a$, hence there exist $n\in\a^k(0)$ and $n'\in\a^{-k}(0)$ such that $x-x'=n+n'$ for some $k\in\N_+$.  Since $\apply{\phi}{0}{0}$, we see that $\apply{(\phi\circ\a^{k'})}{n'}{0}$. However, $\phi\circ\a=\b\circ\phi$, which implies that there exists $m'\in B$ such that $\apply{\phi}{n'}{m'}$ and $\apply{\b^{k}}{m'}{0}$. It means that $m'\in\gker\b$. Let $m\in\a^{k-1}(0)$ be such that $\apply{\a}{m}{n}$. Now from the linearity of $\a$ and $\phi$ we deduce that $\apply{\a}{(a-m)}{(x-n)}$ and $\apply{\phi}{(x'+n')}{(b+m')}$. Since $x-n=x'+n'$, $[a-n]=[a]$ and $[b+m']=[b]$, the inclusion is proven.
\end{proof}

\begin{prop}
    \label{prop:Leray is a functor}
    $\LE$ and $\LM$ are endofunctors on $\Endo\r{R}$.
\end{prop}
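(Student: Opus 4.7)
The plan is to verify the functoriality requirements separately for $\LE$ and $\LM$: object assignment, identity preservation, and composition preservation (the assignment on morphisms and its compatibility with the endomorphism structure was already handled in Proposition \ref{prop:Leray on morphisms}). Object assignment is immediate, since $\gim\a$ and $\gker\a$ are submodules of $A$ by Section \ref{sec:linear_relations}, making $A/\gker\a$ a well-defined $R$-module, and the prescribed relations on $\gim\a$ and $A/\gker\a$ are compositions of linear relations, hence linear. For identity preservation, I would note that $\i_A$ is single-valued and total (a submodule inclusion), so $\i_A^{-1}\circ\i_A=\id_{\gim\a}$ and hence $\LE(\id_A)=\id_{\LE(A,\a)}$; dually, $\pi_A$ is single-valued and surjective, giving $\pi_A\circ\pi_A^{-1}=\id_{A/\gker\a}$ and $\LM(\id_A)=\id_{\LM(A,\a)}$.

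The substantive content is composition preservation. Given composable morphisms $\phi\colon(A,\a)\mto(B,\b)$ and $\psi\colon(B,\b)\mto(C,\c)$, after expanding definitions the desired equalities $\LE(\psi\circ\phi)=\LE(\psi)\circ\LE(\phi)$ and $\LM(\psi\circ\phi)=\LM(\psi)\circ\LM(\phi)$ reduce to showing that the middle factors $\i_B\circ\i_B^{-1}$ and $\pi_B^{-1}\circ\pi_B$ appearing in the right-hand sides can be absorbed. One inclusion in each case is immediate: $\i_B\circ\i_B^{-1}\subseteq\id_B$ handles one direction for $\LE$, while $\id_B\subseteq\pi_B^{-1}\circ\pi_B$ handles the opposite direction for $\LM$.

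For $\LE$, the remaining inclusion reduces to the claim that whenever $a\in\gim\a$, $c\in\gim\c$, and $\apply{\phi}{a}{b}$, $\apply{\psi}{b}{c}$ for some $b\in B$, then in fact $b\in\gim\b$, so the same $b$ can be routed through $\i_B\circ\i_B^{-1}$. Fixing $k\in\N$, I would pick $a_k$ with $\apply{\a^k}{a_k}{a}$ (using $a\in\a^k(A)$) and apply $\phi\circ\a^k=\b^k\circ\phi$ to $(a_k,b)\in\phi\circ\a^k$ to produce a witness that $b\in\b^k(B)$; dually, pick $c_k'$ with $\apply{\c^k}{c}{c_k'}$ (using $c\in\c^{-k}(C)$) and apply $\psi\circ\b^k=\c^k\circ\psi$ to $(b,c_k')\in\c^k\circ\psi$ to obtain a witness that $b\in\b^{-k}(B)$. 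Since $k$ is arbitrary, $b\in\gim\b$.

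The main obstacle is the $\LM$ case: given $b_1,b_2\in B$ with $b_1-b_2\in\gker\b$, $\apply{\phi}{a}{b_1}$, and $\apply{\psi}{b_2}{c}$, one must produce a single intermediate element $b$ by simultaneously adjusting the representatives of $[a]$ and $[c]$. The plan is to invoke decomposition (\ref{eq:generalized kernel decomposition}) to write $b_1-b_2=n+n'$ with $n\in\b^k(0)$ and $n'\in\b^{-k}(0)$ for a common $k\in\N$. From $(0,n)\in\b^k\circ\phi$ (using $\apply{\phi}{0}{0}$) together with $\phi\circ\a^k=\b^k\circ\phi$, I would extract $z\in\a^k(0)\subseteq\gker\a$ with $\apply{\phi}{z}{n}$; dually, from $(n',0)\in\psi\circ\b^k$ (using $\apply{\psi}{0}{0}$) together with $\psi\circ\b^k=\c^k\circ\psi$, extract $m\in\c^{-k}(0)\subseteq\gker\c$ with $\apply{\psi}{n'}{m}$. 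Linearity of $\phi$ and $\psi$ then delivers $\apply{\phi}{a-z}{b_1-n}$ and $\apply{\psi}{b_2+n'}{c+m}$, and since $b_1-n=b_2+n'$, the choices $a':=a-z$, $b:=b_1-n$, $c':=c+m$ produce the desired single intermediate element with $[a']=[a]$ and $[c']=[c]$ following from $z\in\gker\a$ and $m\in\gker\c$.
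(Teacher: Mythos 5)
Your proposal is correct and follows essentially the same route as the paper's proof: the only nontrivial inclusions are handled by exactly the same two arguments, namely producing the intermediate element in $\gim\b$ via the intertwining relations $\phi\circ\a^k=\b^k\circ\phi$ and $\psi\circ\b^k=\c^k\circ\psi$ for the $\LE$ case, and for the $\LM$ case decomposing $b_1-b_2=n+n'$ via (\ref{eq:generalized kernel decomposition}), pulling $n$ back through $\phi$ into $\gker\a$ and pushing $n'$ forward through $\psi$ into $\gker\c$ before adjusting representatives by linearity. The identity-preservation and object-assignment checks also match the paper's (brief) treatment.
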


\begin{proof}
    First, note that for any $(A,\a)\in\Endo\r{R}$ we have $\i_A^{-1}\circ\id_A\circ\i_A=\id_{\gim\a}$ and $\pi_A\circ\id_A\circ\pi_A^{-1}=\id_{A/\gker A}$, thus $\LM$ and $\LE$ preserve identities. Next, we are going to show that they agree with composition. Let $\phi\colon(A,\a)\mto(B,\b)$, $\psi\colon(B,\b)\mto(C,\c)$ be morphisms in $\Endo\r{R}$ for some $(A,\a), (B,\b), (C,\c)\in\Endo\r{R}$. We have to show that the equations
\begin{align}
    \label{eq:LE is a functor}
    (\i_C^{-1}\circ\psi\circ\i_B)\circ(\i_B^{-1}\circ\phi\circ\i_A)&=(\i_C^{-1}\circ\psi\circ\phi\circ\i_A)\\
    \label{eq:LM is a functor}
    (\pi_C\circ\psi\circ\pi_B^{-1})\circ(\pi_B\circ\phi\circ\pi_A^{-1})&=(\pi_C\circ\psi\circ\phi\circ\pi_A^{-1})
\end{align}
hold. The same argument as in the proof of Proposition \ref{prop:Leray on morphisms} shows us that the only non-trivial part in 
(\ref{eq:LE is a functor}) is the right-to-left inclusion and in (\ref{eq:LM is a functor}) is the left-to-right inclusion.

To finish the proof of (\ref{eq:LE is a functor}) let $x\in\gim\a$, $z\in\gim\c$ be such that $\apply{(\psi\circ\phi)}{x}{z}$. Let $y\in B$ be such that $\apply{\a}{x}{y}$ and $\apply{\c}{y}{z}$. We have to show that $y\in\gim\b$. Fix $k\in\N$. Since $x\in\gim\a$ and $z\in\gim\c$ there exists $x'\in A$ and $z'\in A$ such that $\apply{\a^k}{x'}{x}$ and $\apply{\b^k}{z}{z'}$. Thus $\apply{(\phi\circ\a^k)}{x'}{y}$ and $\apply{(\c^k\circ\phi)}{y}{z'}$, hence $\apply{(\b^k\circ\phi)}{x'}{y}$ and $\apply{(\psi\circ\b^k)}{y}{z'}$. Therefore, there exist $y'$ and $y''$ in $B$ such that $\apply{\b^k}{y'}{y}$ and $\apply{\b^k}{y}{y''}$, hence $y\in\b^k(y')\cap\b^{-k}(y'')\subseteq\b^k(B)\cap\b^{-k}(B)$. Since $k$ was arbitrary, we deduce that $y\in\gim\b$.

To finish the proof of equality (\ref{eq:LM is a functor}) let $x\in A$ and $z\in C$ be such that $\apply{(\pi_C\circ\psi\circ\pi_B^{-1}\circ\pi_B\circ\phi\circ\pi_A^{-1})}{[x]}{[z]}$. Without loss of generality, we may suppose that ${\apply{(\psi\circ\pi_B^{-1}\circ\pi_B\circ\phi)}{x}{z}}$. Let $y,y'\in B$ be such that $\apply{\phi}{x}{y}$, $\apply{(\pi_B^{-1}\circ\pi_B)}{y}{y'}$ and $\apply{\psi}{y'}{z}$. It means that $y-y'\in\gker\a$, i.e. there exist $n\in\b^k(0)$ and $n'\in\b^{-k'}(0)$ such that $y-y'=n+n'$ for some $k,k'\in\N$. Now we proceed as in the proof of Proposition \ref{prop:Leray on morphisms} - since $\apply{\phi}{0}{0}$, we see that $\apply{(\b^k\circ\phi)}{0}{n}$. From this and the fact that $\phi$ is a morphism in $\Endo\r{R}$ we deduce that there exists $m'\in\a^k(0)$ such that $\apply{\phi}{m}{n}$. Similarly we find $m'\in\c^{-k'}(0)$ such that $\apply{\psi}{n'}{m'}$. By linearity we obtain that $\apply{\phi}{(x-n)}{(y-n)}$ and $\apply{\psi}{(y'+n')}{(z+n')}$. Since $[x]=[x-n]$ and $[z]=[z+n']$ the inclusion is proven.
\end{proof}

Now we are ready to prove Theorem \ref{thm:Leray is normal}.
\begin{proof}
    Let $(A,\a)\in\Endo\r{R}$. We have to prove first that $(B,\b):=\LM(A,\a)$ is a matching. By Proposition \ref{prop:matching characterization} we only have to show that $\b(0)=\{0\}$ and $b^{-1}(0)=\{0\}$. Let $x\in B$ be such that $\apply{\b}{0}{x}$. Since $\b=\pi_A \circ\a\circ{\pi_A}^{-1}$, there exist $y,y'\in A$ such that $\apply{{\pi}_A^{-1}}{0}{y}, \apply{\a}{y}{y'}$ and $\apply{\pi_A}{y'}{x}$. Thus $y\in\gker\a$ and by Proposition \ref{prop:gim and gker inclusions}, $y'\in\gker\a$ as well, proving that $x=0$. The proof that $b^{-1}(0)=\{0\}$ is similar.

Next, we want to prove that $(C,\c):=\LE(B,\b)$ is a matching and a correspondence. Since $\c=\i_B^{-1}\circ\b\circ\i_B$ is a composition of matchings, it is a matching as well. Since $C=\gim\b$ and $\b$ is a matching, we can apply Proposition \ref{prop:gim and gker inclusions} -- we have $C\subseteq\b^{-1}(C)$, identifying $C$ as a subset of $B$. Thus for any $x\in C$ there exists $x'\in C$ such that $\apply{\c}{x}{x'}$, proving that $\c$ is total. The proof of surjectivity of $\c$ is similar.
\end{proof}

Note that in the proof of Theorem \ref{thm:Leray is normal} the order of compositions is crucial. Furthermore, $\LM\circ\LE$ is still a functor, but not a normal one in the fully general setting (this is shown in the Appendix). Nevertheless, after restricting to the $\Endo\fr{R}$ category, we will show that both orders of compositions yield the same result (see Corollary \ref{col:LE and LM commute}).

%

\section{Equivalence of Leray and Szymczak functors}
\label{sec:equivalence}

Let us fix some notation. If $\cC$ is a category, $A,B\in \cC$, by $A\iso{\cC} B$ we will denote the fact that they are isomorphic in $\cC$. For categories $\Endo\cC$ and $\Szym\cC$ we write shortly $A\iso{\Endo}B$ and $A\iso{\Szym} B$, if $\cC$ is clear from the context. 

Let $(A,\a), (B,\b)\in\Szym\cC$ be isomorphic. There exists mutually inverse isomorphisms $[\phi, n]\colon(A,\a)\to (B,\b)$ and $[\psi, m]\colon(B,\b)\to (A,\a)$. Note that
\begin{align}
    \label{eq:Szymczak equivalence:phi is a morphism}
    \phi\circ\a&=\b\circ\phi\\
    \label{eq:Szymczak equivalence:psi is a morphism}
    \psi\circ\b&=\a\circ\psi,
\end{align}
since $\phi$ and $\psi$ must be morphisms in $\Endo\cC$ by the definition of morphisms in $\Szym\cC$. The fact that $[\phi, n]$ and $[\psi, m]$ are mutually inverse translates to
\begin{align}
    \label{eq:Szymczak equivalence:psi phi = id}
    \psi\circ\phi\circ\a^k&=\a^{n+m+k}\\
    \label{eq:Szymczak equivalence:phi psi = id}
    \phi\circ\psi\circ\b^k&=\b^{n+m+k}
\end{align}
for some $k\in\N$. Obviously, by Theorem \ref{thm:Szymczak functor is the universal normal functor}, if $(A,\a)\iso{\Szym}(B,\b)$, then $L(A,\a)\iso{\Endo}L(B,\b)$. Unfortunately, the converse fails even for such simple objects as $\Z$-modules. 

\begin{example}
\label{ex:failure of equivalence not artinian}
    Let $R=\Z$ and consider the maps (treated as linear relations)
    ${(\cdot 2)\colon\Z\ni x\mto 2\cdot x\in\Z}$ and ${(\cdot 0)\colon\Z\ni x\mto 0\in\Z}$.
    Then $L(\Z,(\cdot 2)) = L(\Z,(\cdot 0)) = (0, \id_0)$,
    hence ${(\Z,(\cdot 2))\Leq(\Z,(\cdot 0))}$. But notice that if
    $[\phi,n]\colon(\Z,(\cdot 2))\mto(\Z,(\cdot 0))$ and
    $[\psi,n]\colon(\Z,(\cdot 0))\mto(\Z,(\cdot 2))$ were inverse morphisms in $\Szym\r{\Z}$,
    then $\phi\circ(\cdot 2)=(\cdot 0)\circ\phi$, hence $\phi(\Z)=0$.
    On the other hand, $\phi\circ\psi\circ(\cdot 2)^{k+n+m}=(\cdot 2)^k$,
    which implies that $\phi(\Z)=\Z$, a contradiction.
\end{example}

\begin{example}
    \label{ex:failure of equivalence not noetherian}
    Let $R=\Z$ and 
    $$
    {\Z(2^{\infty}):=\langle x_0,x_1,x_2\ldots|2x_0=0,2x_1=x_0,2x_2=x_1 \ldots\rangle}
    $$
    be the $2$-Pr{\"u}fer group. Consider the maps (treated as linear relations) $(\cdot 2)\colon\Z(2^{\infty})\ni x\mto 2\cdot x\in\Z(2^{\infty})$ and $(\cdot 0)\colon\Z(2^{\infty})\ni x\mto 0\in\Z(2^{\infty})$. Then again $(\Z(2^{\infty}),(\cdot 2))\Leq(\Z(2^{\infty}),(\cdot 0))$ but $(\Z(2^{\infty}),(\cdot 2))$ is not isomorphic to $(\Z(2^{\infty}),(\cdot 0))$ in Szymczak category by the similar reasoning as in Example \ref{ex:failure of equivalence not artinian}.
\end{example}

Examples \ref{ex:failure of equivalence not artinian} and \ref{ex:failure of equivalence not noetherian} suggest that we have to impose additional assumptions on the considered modules.

\begin{definition}\label{defn:finite length}
    Let $M$ be an $R$-module. We say that $M$ is \emph{Noetherian} and \emph{Artinian},
    if every increasing and decreasing sequence of its submodules stabilizes, respectively.
    A module is of \emph{finite length} if it is both Noetherian and Artinian.
    We define $\fr{R}$ to be the full subcategory of $\r{R}$, whose objects are $R$-modules of finite length.
\end{definition}

Note that objects interesting from the computational point of view, i.e. finite modules and finite-dimensional vector spaces, are of finite length. Experimental evidence (see Figure \ref{fig:z3_example}) suggests that for $(A,\a), (B,\b)\in \Endo\fr{R}$ if $L(A,\a)\iso{\Endo}L(B,\b)$, then $(A,\a)\iso{\Szym}(B,\b)$. Let us investigate the simplest case, namely the $\Z_p$ fields.

\subsection{Case of $\Z_p$ fields}
\label{sec:zp_fields}

In this section let $p$ be a prime number and $\cC$ be the full subcategory of $\fr{\Z_p}$ with only two objects, which are the zero space and the $\Z_p$ field (as a one-dimensional vector space over itself). We define linear relations $\top:=\Z_p\oplus\Z_p$, $\bot:=\{(0,0)\}$ and $(\cdot\l)\colon\Z_p\ni x\mto\l\cdot x\in\Z_p$ for $\l\in\Z_p$.

\begin{prop}
    \label{prop:Relations in Z_p field}
        There are exactly $p+3$ linear relations on $\Z_p$, that is $\top, \bot, (\cdot 0)^{-1}$ and $(\cdot\l)$ for $\l\in\Z_p$.
\end{prop}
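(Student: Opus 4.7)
My plan is to exploit the fact that $\Z_p$ is a field, so a linear relation on $\Z_p$ is precisely a vector subspace of the two-dimensional $\Z_p$-vector space $\Z_p \oplus \Z_p$. Thus the proof reduces to a simple dimension count plus an identification of each subspace with one of the $p+3$ relations in the list.

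First I would record that the only possible dimensions of a subspace of $\Z_p \oplus \Z_p$ are $0$, $1$, or $2$. There is a unique $0$-dimensional subspace, namely $\bot = \{(0,0)\}$, and a unique $2$-dimensional one, namely $\top = \Z_p \oplus \Z_p$. For the $1$-dimensional subspaces, I would observe that each is determined by a nonzero vector up to scalar multiplication, and parametrize them by the projective line $\mathbb{P}^1(\Z_p)$. Concretely, any line either contains a vector of the form $(1,\lambda)$ for a unique $\lambda \in \Z_p$, giving $p$ lines, or it is the vertical line $\{0\} \oplus \Z_p$, giving one additional line; hence exactly $p+1$ lines in total.

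Next I would match these $p+1$ lines to the relations in the statement. The line spanned by $(1,\lambda)$ is $\{(x, \lambda x) : x \in \Z_p\}$, which is precisely the graph of the map $x \mapsto \lambda x$, i.e.\ the relation $(\cdot\lambda)$. The vertical line $\{(0,y) : y \in \Z_p\}$ is the set of pairs $(x,y)$ with $(\cdot 0)(x) \ni y$ reversed, i.e.\ $\{(x,y) : y \in \Z_p,\ x=0\} = (\cdot 0)^{-1}$, since $(\cdot 0) = \Z_p \oplus \{0\}$. Summing up, $1 + 1 + p + 1 = p+3$, and all listed relations are pairwise distinct because they correspond to distinct subspaces.

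I do not expect a real obstacle here: everything is a direct consequence of $\Z_p$ being a field and the submodule-versus-subspace identification. The only care needed is the small bookkeeping detail that the family $\{(\cdot \lambda) : \lambda \in \Z_p\}$ already accounts for $\lambda = 0$ (the horizontal line), so the ``extra'' vertical line must be written as $(\cdot 0)^{-1}$ rather than as some $(\cdot\lambda)$, which explains the slightly asymmetric form of the enumeration.
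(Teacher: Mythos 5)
Your proof is correct, but it takes a genuinely different route from the paper's. You classify the linear relations as vector subspaces of the two-dimensional space $\Z_p\oplus\Z_p$ and count them by dimension, parametrizing the $p+1$ lines by the projective line $\mathbb{P}^1(\Z_p)$ and then identifying each line with a graph $(\cdot\l)$ or with the vertical line $(\cdot 0)^{-1}$. The paper instead argues entirely in the relation-as-multivalued-map language: it splits on whether $\a(0)=\{0\}$ (in which case $\a$ is single-valued by linearity, and either $\a=\bot$ or a nonzero pair $(x_0,y_0)\in\a$ forces $\a=(\cdot\,y_0/x_0)$) and, when $\a(0)=\Z_p$, splits further on $\a^{-1}(0)$ to get $(\cdot 0)^{-1}$ or $\top$. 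Your argument is arguably cleaner and makes the count $p+3=1+1+(p+1)$ transparent; it also generalizes immediately to counting linear relations between arbitrary finite-dimensional spaces over finite fields via subspace enumeration. The paper's case analysis buys consistency with the single-valued/total/injective/surjective vocabulary it uses elsewhere (e.g.\ Proposition \ref{prop:matching characterization}) and avoids invoking the subspace-of-the-direct-sum picture explicitly. One cosmetic remark: your sentence identifying the vertical line with $(\cdot 0)^{-1}$ is somewhat garbled as written; it would read better as simply ``$(\cdot 0)=\Z_p\oplus\{0\}$, hence $(\cdot 0)^{-1}=\{0\}\oplus\Z_p$, which is the vertical line.''
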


\begin{proof}
    Let $\a$ be a linear relation on $\Z_p$. If $\a(0)=\{0\}$, then $\a$ is single-valued, since if $\apply{\a}{x}{y}$ and $\apply{\a}{x}{y'}$, then by linearity $\apply{\a}{0}{(y-y')}$. In this case if $\a\neq\bot$, then there exist $x_0,y_0\in\Z_p$ such that $\apply{\a}{x_0}{y_0}$, where $x_0\neq 0$. Hence we can define $\l:=\frac{y_0}{x_0}$. By linearity we deduce that $\apply{\a}{1}{\l}$, hence $\a$ is total as well. Thus $\a=(\cdot\l)$. Now we assume that $\a(0)\neq\{0\}$. Since $\a(0)$ is a subspace of $\Z_p$ it must be the case that $\a(0)=\Z_p$. If $\a^{-1}(0)=\{0\}$ then $\a=(\cdot 0)^{-1}$, in the other case $\a=\top$.
\end{proof}

Note that $L(\Z_p,(\cdot\l))=(\Z_p,(\cdot\l))$ for $\l\neq 0$, and $L(A,\a)=(0,\id_0)$ in other cases. The next proposition assures that all the relations of the form $(\cdot\l)$ are contained in different equivalence classes.

\begin{prop}
    \label{prop:Szymczak equivalence in auto C is trivial - Z_p case}
    Let $\l,\m\in\Z_p$. If $(\Z_p,(\cdot\l))\iso{\Szym}(\Z_p,(\cdot\m))$, then $\l=\m$.
\end{prop}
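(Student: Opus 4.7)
The plan is to reduce the Szymczak-isomorphism assumption to an $\Endo$-isomorphism via the Leray functor, then use the concrete classification in Proposition \ref{prop:Relations in Z_p field} to read off $\l=\m$. The reduction is essentially free: since $L$ is normal (Theorem \ref{thm:Leray is normal}), the universal property of the Szymczak functor (Theorem \ref{thm:Szymczak functor is the universal normal functor}) gives a functor $L'\colon\Szym\r{\Z_p}\to\Endo\r{\Z_p}$ with $L=L'\circ\Szym$. Applying $L'$ to an isomorphism in $\Szym$ produces an isomorphism in $\Endo$, so $(\Z_p,(\cdot\l))\iso{\Szym}(\Z_p,(\cdot\m))$ forces $L(\Z_p,(\cdot\l))\iso{\Endo}L(\Z_p,(\cdot\m))$.

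Next I would split on whether the multipliers vanish. As already observed in the text, $L(\Z_p,(\cdot\l))=(\Z_p,(\cdot\l))$ when $\l\neq 0$, and $L(\Z_p,(\cdot 0))=(0,\id_0)$, because $(\cdot 0)^{-1}(0)=\Z_p$ gives $\gker(\cdot 0)=\Z_p$. If $\l=0$, the $\Endo$-isomorphism identifies $(0,\id_0)$ with $L(\Z_p,(\cdot\m))$, whose underlying module has size $p$ unless $\m=0$; so $\m=0$ and we are done. By symmetry the same applies if $\m=0$, so we are reduced to $\l,\m\in\Z_p^\times$.

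In that remaining case I need to show $(\Z_p,(\cdot\l))\iso{\Endo}(\Z_p,(\cdot\m))$ implies $\l=\m$. An isomorphism in $\Endo\r{\Z_p}$ is a morphism $\phi\colon\Z_p\mto\Z_p$ that is simultaneously a matching and a correspondence and intertwines $(\cdot\l)$ with $(\cdot\m)$. Using Proposition \ref{prop:Relations in Z_p field}, I would go through the list $\{\top,\bot,(\cdot 0)^{-1},(\cdot\nu)\}$ and discard every candidate that fails to be a matching or fails to be a correspondence: $\top$ and $(\cdot 0)^{-1}$ are not single-valued (their value at $0$ is all of $\Z_p$), $\bot$ is not total, and $(\cdot 0)$ fails surjectivity; only $\phi=(\cdot\nu)$ with $\nu\in\Z_p^\times$ survives. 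The intertwining $\phi\circ(\cdot\l)=(\cdot\m)\circ\phi$ then reads $(\cdot\,\nu\l)=(\cdot\,\m\nu)$, yielding $\nu\l=\m\nu$ and hence $\l=\m$.

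The only non-routine point is the descent from $\Szym$ to $\Endo$ via $L$, but this is already handed to us by Theorems \ref{thm:Leray is normal} and \ref{thm:Szymczak functor is the universal normal functor}; everything downstream is bookkeeping on the short explicit list of linear relations on $\Z_p$. I expect the cleanest write-up to be a single paragraph invoking the universal property, followed by a two-line case distinction $\l=0$ vs.\ $\l\neq 0$.
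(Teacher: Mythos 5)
Your proof is correct, but it takes a genuinely different route from the paper's. The paper's proof of this proposition stays entirely inside the Szymczak category: it writes out the four equations satisfied by a pair of mutually inverse morphisms $[\phi,n]$, $[\psi,m]$, cancels $(\cdot\l)^{-k}$ to get $\psi\circ\phi=(\cdot\l)^{n+m}$, rules out $\m=0$ by showing $\phi\in\{\bot,(\cdot 0)\}$ and $\psi\in\{\top,(\cdot 0)^{-1}\}$ would force $\psi\circ\phi$ to be a non-isomorphism, and then concludes $\l=\m$ by a direct computation with $\psi$. You instead front-load the categorical machinery: normality of $L$ (Theorem \ref{thm:Leray is normal}) plus the universal property (Theorem \ref{thm:Szymczak functor is the universal normal functor}) descend the $\Szym$-isomorphism to an $\Endo$-isomorphism of the Leray images --- a step the paper itself records just before Example \ref{ex:failure of equivalence not artinian}, so there is no circularity --- and then you finish by the cardinality argument for the $\l=0$ case and the classification of Proposition \ref{prop:Relations in Z_p field} for $\l,\m\neq 0$. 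Your case analysis is sound: only $(\cdot\nu)$ with $\nu\neq 0$ is simultaneously a matching and a correspondence, and the intertwining relation gives $\nu\l=\m\nu$. What each approach buys: the paper's version is a hands-on warm-up that illustrates how to manipulate Szymczak morphisms concretely in the simplest setting, which is the stated purpose of Section \ref{sec:zp_fields}; yours is shorter and essentially anticipates the general argument the paper deploys later in the proof of Theorem \ref{thm: Szymczak on auto C is identity functor}, so it scales better but makes the $\Z_p$ section less self-contained as an independent illustration.
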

\begin{proof}
    Without loss of generality $\l\neq 0$. We will prove that in this case $\mu\neq 0$ as well. Let $[\phi,n]\colon(\Z_p,(\cdot\l))\mto(\Z_p,(\cdot\m))$ and $[\psi,m]\colon(\Z_p,(\cdot\m))\mto(\Z_p,(\cdot\l))$ be the mutually inverse isomorphisms in $\Szym\cC$. It means that there exists $k\in\N$ such that the equations
\begin{align*}
    \phi\circ(\cdot\l)&=(\cdot\m)\circ\phi\\
    \psi\circ(\cdot\m)&=(\cdot\l)\circ\psi\\
    \psi\circ\phi\circ(\cdot\l)^k&=(\cdot\l)^{n+m+k}\\
    \phi\circ\psi\circ(\cdot\m)^k&=(\cdot\m)^{n+m+k}
\end{align*}
hold. Composing the third one from the right by $(\cdot\l)^{-k}$ we obtain
$$
    \psi\circ\phi=(\cdot\l)^{n+m},
$$
since $(\cdot\l)$ is an isomorphism. Suppose now that $\mu=0$. Then
$$
\phi(\Z_p)=(\phi\circ(\cdot\l))(\Z_p)=((\cdot\m)\circ\phi)(\Z_p)\subseteq(\cdot\m)(\Z_p)=\{0\},
$$
hence $\phi(\Z_p)=\{0\}$. From Proposition \ref{prop:Relations in Z_p field} we deduce that $\phi\in\{\bot, (\cdot 0)\}$. Similarly we deduce that $\psi(\{0\})=\Z_p$, hence $\psi\in\{\top,(\cdot 0)^{-1}\}$. It implies that $\psi\circ\phi\in\{\bot, (\cdot 0), \top, (\cdot 0)^{-1}\}$. On the other hand, we have $\psi\circ\phi=(\cdot\l)^{n+m}$, a contradiction. Thus $\m\neq 0$ and $(\cdot\m)$ is an isomorphism. Therefore, by composing the fourth equality with $(\cdot\m)^{-k}$ from the right we obtain
$$
    \phi\circ\psi=(\cdot\m)^{n+m}.
$$
Since $(\cdot\m)$ is an isomorphism, we deduce that $\psi$ is a monomorphism. From the linearity of $\psi$ we deduce that $(\cdot\l)\circ\psi=\psi\circ(\cdot\l)$. Indeed,

$$
    \apply{((\cdot\l)\circ\psi)}{x}{y} \Longleftrightarrow\apply{\psi}{x}{\left(\frac{y}{\l}\right)}\Longleftrightarrow\apply{\psi}{(\l x)}{y}\Longleftrightarrow\apply{(\psi\circ(\cdot\l))}{x}{y}.
$$
Thus $\psi\circ(\cdot\l)=\psi\circ(\cdot\m)$. Finally, observe that $(\cdot\l)$ and $(\cdot\m)$ can be viewed as endomorphisms of $(\Z_p,(\cdot\l))$. Hence, $(\cdot\l)=(\cdot\m)$, therefore $\l=\m$. 
\end{proof}

To complete the description, one needs this last proposition.

\begin{prop}
    The objects $(\Z_p,\top)$, $(\Z_p,\bot)$, $(\Z_p,(\cdot 0))$, $(\Z_p,(\cdot 0)^{-1})$ and $(0,\id_0)$ are isomorphic in $\Szym\cC$
\end{prop}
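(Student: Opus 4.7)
The plan is to show each of the five listed objects is isomorphic in $\Szym\cC$ to $(0,\id_0)$; the conclusion then follows by transitivity of $\iso{\Szym}$. Since $(0,\id_0)$ is already in the list, it remains to handle the four objects $(\Z_p,\top)$, $(\Z_p,\bot)$, $(\Z_p,(\cdot 0))$ and $(\Z_p,(\cdot 0)^{-1})$. For each such $(A,\a)$ I will exhibit morphisms $\phi\in\Endo\cC((A,\a),(0,\id_0))$ and $\psi\in\Endo\cC((0,\id_0),(A,\a))$ whose classes $[\phi,0]$ and $[\psi,0]$ are mutually inverse in $\Szym\cC$.

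The candidates for $\phi$ and $\psi$ are severely constrained: the only linear relations in $\cC(\Z_p,0)$ and $\cC(0,\Z_p)$ are the zero relation $\{(0,0)\}$ and the total one ($\Z_p\oplus 0$ and $0\oplus\Z_p$ respectively). The intertwining condition for membership in $\Endo\cC$ essentially dictates the choice in each case: for $(\Z_p,\top)$ take both $\phi$ and $\psi$ total; for $(\Z_p,\bot)$ take both to be the zero relation; for $(\Z_p,(\cdot 0))$ take $\phi$ total and $\psi$ zero; and for $(\Z_p,(\cdot 0)^{-1})$ take the reverse assignment. A brief direct verification confirms that these choices commute with the endomorphism $\a$ as required.

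The crucial computation is that in every case $\psi\circ\phi=\a$, whereas $\phi\circ\psi=\id_0$ automatically since the only endomorphism of $0$ is $\id_0$. Thus $[\phi,0]\circ[\psi,0]=[\id_0,0]$ on the nose, while $[\psi,0]\circ[\phi,0]=[\a,0]$ equals $[\id_{\Z_p},0]$ by the Szymczak equivalence relation provided one can find $k\in\N$ with $\a^{k+1}=\a^k$. This holds with $k=1$ because each of $\top$, $\bot$, $(\cdot 0)$ and $(\cdot 0)^{-1}$ is idempotent, as a direct relation computation shows.

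The only real obstacle is careful bookkeeping of relation compositions, in particular confirming that the forced choices of $\phi$ and $\psi$ genuinely satisfy the intertwining condition for each of the four endomorphisms. We note that once Theorem \ref{thm:Special Szymczak representative} is available the proposition follows in one line from the observation that the Leray functor sends each of the five listed objects to $(0,\id_0)$; the direct approach above avoids that dependency and uses only the definitions already in place.
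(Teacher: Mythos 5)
Your proposal is correct and follows essentially the same route as the paper: the paper's connecting morphisms $[\pi\circ\a,0]$ and $[\a\circ\pi^{-1},0]$ (with $\pi\colon\Z_p\to 0$ the zero map) coincide with your ``forced'' choices of $\phi$ and $\psi$ in each of the four cases, and both arguments reduce the inverse condition to the idempotence $\a\circ\a=\a$. The only cosmetic difference is that you compute $\psi\circ\phi=\a$ directly, while the paper verifies $(\a\circ\pi^{-1})\circ(\pi\circ\a)\circ\a=\a$ by comparing images and preimages of $\Z_p$ and invoking Proposition \ref{prop:Relations in Z_p field}.
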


\begin{proof}
    Let $\a\in\{\top,\bot,(\cdot 0), (\cdot 0)^{-1}\}$. The inverse isomorphisms between $(\Z_p,\a)$ and $(0,\id_0)$ are of the form
\begin{align*}
    [\pi\circ\a,0]\colon(\Z_p,\a)&\mto(0,\id_0)\\
    [\a\circ\pi^{-1},0]\colon(0,\id_0)&\mto(\Z_p,\a),
\end{align*}
where $\pi:\Z_p\to 0$ is the zero map. The equations
\begin{align*}
    (\pi\circ\a)\circ\a&=\id_0\circ(\pi\circ\a)\\
    (\a\circ\pi^{-1})\circ\id_0&=\a\circ(\a\circ\pi^{-1})    
\end{align*}
are satisfied since $\a\circ\a=\a$. Note that
$$
    (\pi\circ\a)\circ(\a\circ\pi^{-1})\circ\id_0=\id_0
$$
also trivially holds. To check that
$$
    (\a\circ\pi^{-1})\circ(\pi\circ\a)\circ\a=\a
$$
is satisfied, one should notice that the images and preimages of $\Z_p$ under the relations on the both sides of the equation are the same. Since $(\a\circ\pi^{-1})\circ(\pi\circ\a)$ is never a function, by Proposition \ref{prop:Relations in Z_p field} it must be an element of $\{\top,\bot,(\cdot 0), (\cdot 0)^{-1}\}$. But there are no two different relations in this set with the same image and preimage of $\Z_p$. This completes the proof.
\end{proof}

Thus we have proved Theorem \ref{thm:Leray and Szymczak equivalence} in this particular case. We illustrate the situation in the category $\C$ in case $p=3$ in Figure \ref{fig:z3_example}. 

\begin{figure}[h]%
{\includegraphics[width=0.95\textwidth]{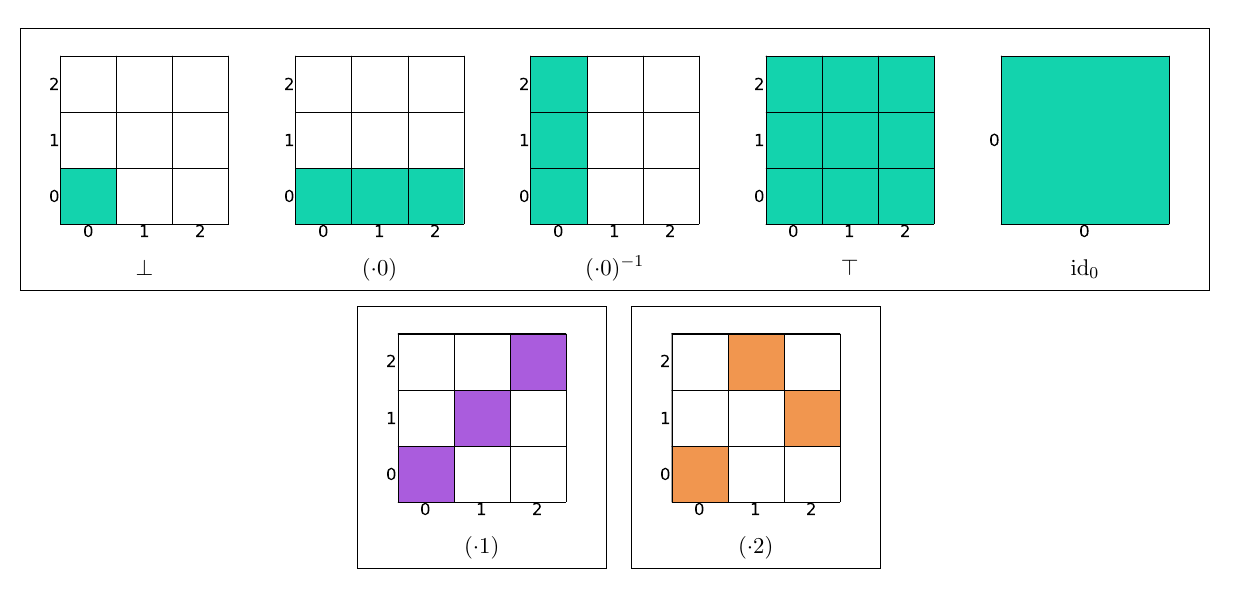}}
{\caption{Linear relations over $\Z_3$ field with $\id_0$. Every box (color) shows one class of Szym-equivalent objects.}
\label{fig:z3_example}}
\end{figure}

\subsection{General case}
\label{sec:general_case}

Before we prove the main theorem notice that the category $\fr{R}$ is closed under taking subobjects and quotients. Hence we can consider the Leray functor as a functor from $\Endo\fr{R}$ to itself. Furthermore, for $(A,\a)\in\Endo\fr{R}$ we can prove that $\gim\a\subseteq\a(\gim\a)$ and $\gim\a\subseteq\a^{-1}(\gim\a)$, without the assumption that $\a$ is a matching. Indeed, since the sequences $\{\a^l(A)\}_{l\in\N}$ and $\{\a^{-l}(A)\}_{l\in\N}$ of submodules of $A$ stabilize, there exists $k\in\N$ such that $\a^l(A)=\a^l(A)$ and $\a^{-k}(A)-\a^{-l}(A)$ for $l\geq k$.
Taking $x\in\gim\a$, there exists $y,y'\in A$ such that $\apply{\a^(k+1)}{x}{y}$ and $\apply{\a^(k-1)}{x}{y'}$. Thus there exists $x'\in A$ such that $\apply{\a}{x'}{x}$ and $\apply{\a^k}{y}{x'}$. Furthermore $\apply{\a^k}{x'}{y'}$, thus $x'\in\a^k(A)\cap\a^{-k}(A)=\gim\a$. Since $\apply{\a}{x'}{x}$, we deduce that $x\in\a(\gim\a)$, proving the first inclusion. The proof of the other one is similar.

\begin{lemma}
    \label{lemma: (A,a) is equivalent to LE(A,a)}
    Let $(A,\a)\in\Endo\fr{R}$. Then $(A,\a)\iso{\Szym}\LE(A,\a)$.
\end{lemma}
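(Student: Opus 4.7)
The plan is to build mutually inverse morphisms in $\Szym\fr{R}$ of the form $[\phi, N]$ and $[\psi, N]$ between $(A,\a)$ and $\LE(A,\a) = (\gim\a, \b)$, where $\b := \i_A^{-1} \circ \a \circ \i_A$. Since $A$ has finite length, the decreasing chains $\{\a^k(A)\}_{k\in\N}$ and $\{\a^{-k}(A)\}_{k\in\N}$ stabilize; I fix $N \in \N$ past both stabilization indices, so that $\gim\a = \a^N(A) \cap \a^{-N}(A)$, and define
\[
    \phi := \i_A^{-1} \circ \a^N \colon A \mto \gim\a, \qquad \psi := \a^N \circ \i_A \colon \gim\a \mto A.
\]

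First I would verify that $\phi$ and $\psi$ are morphisms in $\Endo\fr{R}$. Since $\phi(x) = \a^N(x) \cap \gim\a$, the identity $\phi \circ \a = \b \circ \phi$ reduces to showing that for each $y \in \a^{N+1}(x) \cap \gim\a$ some chain $x \xrightarrow{\a^N} z \xrightarrow{\a} y$ can be chosen with $z \in \gim\a$. One picks any such $z$: then $z \in \a^N(A) = \bigcap_k \a^k(A)$ since $N$ is past stabilization, while $z \in \bigcap_k \a^{-k}(A)$ follows by prolonging the single arrow $z \to y$ through arbitrary-length chains issuing from $y$, which exist because $y \in \gim\a$. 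The morphism property of $\psi$ is symmetric: for a chain $x \xrightarrow{\a} z \xrightarrow{\a^N} y$ with $x \in \gim\a$, the same squeeze from the positive and negative sides forces $z \in \gim\a$.

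Next I would compute the two compositions. The same chain-pinching argument gives $\psi \circ \phi = \a^{2N}$ as a relation on $A$. For the other direction, I would first prove by induction on $k$ the auxiliary identity $\b^k = \i_A^{-1} \circ \a^k \circ \i_A$ for $k \geq 1$, again by pinning intermediate elements inside $\gim\a$ using finite length. This yields $\phi \circ \psi = \i_A^{-1} \circ \a^{2N} \circ \i_A = \b^{2N}$. Invoking the Szymczak equivalence (\ref{eq:Szymczak equivalence relation}) with $k = 0$ then shows $[\a^{2N}, 2N] = [\id_A, 0]$ and $[\b^{2N}, 2N] = [\id_{\gim\a}, 0]$, so $[\phi, N]$ and $[\psi, N]$ are mutually inverse isomorphisms in $\Szym\fr{R}$.

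The hard part will be the morphism-property verification together with the auxiliary identity $\b^k = \i_A^{-1} \a^k \i_A$. The non-matching example from the Appendix (and the discussion preceding the lemma) shows that the converse inclusion $\a(\gim\a) \subseteq \gim\a$ can fail even in finite length, so the naive candidates $\i_A$ and $\i_A^{-1}$ are not themselves morphisms in $\Endo\fr{R}$. Absorbing a full stabilization power of $\a$ into $\phi$ and $\psi$ is precisely what forces every intermediate element appearing in the diagram chase to sit on chains of length at least $N$ on both sides, and finite length then traps it inside $\bigcap_k \a^k(A) \cap \bigcap_k \a^{-k}(A) = \gim\a$.
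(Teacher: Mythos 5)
Your proposal is correct and follows essentially the same route as the paper: the same candidates $\phi=\i_A^{-1}\circ\a^{k}$ and $\psi=\a^{k}\circ\i_A$ with $k$ past stabilization, the same four identities to check, and the same squeeze argument trapping intermediate elements of chains inside $\gim\a$. The only cosmetic differences are that you prove the identity $\b^{j}=\i_A^{-1}\circ\a^{j}\circ\i_A$ by induction where the paper extends a chain directly, and you argue the morphism property of $\psi$ symmetrically where the paper obtains it by inverting the identity for $\phi$.
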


\begin{proof}
    Let $(B,\b)=\LE(A,\a)$. Since $A$ is of finite length, there exists $k\in\N$ such that $\a^k(A)=\a^l(A)$ and $\a^{-k}(A)=\a^{-l}(A)$ for all $l\geq k$. We set
\begin{align*}
    \phi&:=\i_A^{-1}\circ\a^k,\\
    \psi&:=\a^k\circ\i_A.
\end{align*}
Our goal is to prove that $[\phi,k]\colon(A,\a)\mto(B,\b)$ and $[\psi,k]\colon(B,\b)\mto(A,\a)$ are mutually inverse isomorphisms, i.e. the formulas (\ref{eq:Szymczak equivalence:phi is a morphism}) - (\ref{eq:Szymczak equivalence:phi psi = id}) are satisfied. Since $\b=\i_A^{-1}\circ\a\circ\i_A$, we can rewrite them only in terms of $\a$ and $\i_A$:
\begin{align*}
    \tag{a}\label{lemma: (A,a) is equivalent to LE(A,a) helper a}
   (\i_A^{-1}\circ\a\circ\i_A)\circ(\i_A^{-1}\circ\a^k)&=(\i_A^{-1}\circ\a^k)\circ\a\\
    \tag{b}\label{lemma: (A,a) is equivalent to LE(A,a) helper b}
    (\a^k\circ\i_A)\circ(\i_A^{-1}\circ\a\circ\i_A)&=\a\circ(\a^k\circ\i_A)\\
    \tag{c}\label{lemma: (A,a) is equivalent to LE(A,a) helper c}
    (\a^k\circ\i_A)\circ(\i_A^{-1}\circ\a^k)&=\a^{2k}\\
    \tag{d}\label{lemma: (A,a) is equivalent to LE(A,a) helper d}
    (\i_A^{-1}\circ\a\circ\i_A)^{2k}&=(\i_A^{-1}\circ\a^k)\circ(\a^k\circ\i_A).
\end{align*}
Since $\i_A\circ\i_A^{-1}\subseteq\id_A$, we see that the left-to-right inclusion is satisfied in all of the above formulas.

In order to prove the other inclusion in (\ref{lemma: (A,a) is equivalent to LE(A,a) helper a}), let $x\in A$ and $y\in B$ be such that $\apply{(\i_A^{-1}\circ\a^{k+1})}{x}{y}$. There exists $z\in A$ such that $\apply{\a^k}{x}{z}$ and $\apply{\a}{z}{y}$ (where we treat $B$ as a subset of $A$). Thus, $z\in\bigcap_{l\in\N}\a^l(A)$ from the definition of $k$ and $z\in\bigcap_{l\in\N}\a^{-l}(A)$ due to $y\in B$. Hence $z\in \bigcap_{l\in\Z}\a^l(A)=B$, which means that actually $\apply{(\i_A^{-1}\circ\a\circ\i_A\circ\i_A^{-1}\circ\a^k)}{x}{y}$.

Notice that the opposite inclusion in (\ref{lemma: (A,a) is equivalent to LE(A,a) helper b}) follows from the equality (\ref{lemma: (A,a) is equivalent to LE(A,a) helper a}) with $\a$ replaced with $\a^{-1}$ and then taking the inverse on both sides.

Now we prove right-to-left inclusion in (\ref{lemma: (A,a) is equivalent to LE(A,a) helper c}). Let $x,x'\in A$ be such that $\apply{\a^{2k}}{x}{x'}$. There exists $z\in A$ such that $\apply{\a^k}{x}{z}$ and $\apply{\a^k}{z}{x'}$, hence $z\in\a^k(A)\cap\a^{-k}(A)=B$ by the definition of $k$. Therefore $\apply{(\a^k\circ\i_A\circ\i_A^{-1}\circ\a^k)}{x}{x'}$.

It remains to prove right-to-left inclusion in (\ref{lemma: (A,a) is equivalent to LE(A,a) helper d}). Let $y,y'\in B$ be such that $\apply{(\i_A^{-1}\circ\a^{2k}\circ\i_A)}{y}{y'}$. There exists a sequence $\left\{z_l\right\}_{l=0}^{2k}$ such that $z_0=y$, $z_{2k}=y'$ and $\apply{\a}{z_l}{z_{l+1}}$ for $l=0,1,\ldots, 2k-1$. We show that $z_l\in B$ for every such $l$. Since $y\in B$, for every $l\in\N$ there exists $z_{-l}$ such that $\apply{\a^l}{z_{-l}}{y}$, and since $y'\in B$ $l\in\N$, $l>2k$, there exists $z_{l}$ such that $\apply{\a^l}{y'}{z_l}$. Thus we have obtained a sequence $\{z_l\}_{l\in\Z}$ such that $\apply{\a^j}{z_l}{z_{l+j}}$ for any $l,j\in \Z$, hence $z_l\in B$ for every $l\in\Z$.
\end{proof}

\begin{lemma}
    \label{lemma: (A,a) is equivalent to LM(A,a)}
    Let $(A,\a)\in\Endo\fr{R}$. Then $(A,\a)\iso{\Szym}\LM(A,\a)$.
\end{lemma}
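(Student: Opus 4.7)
The plan is to mirror the proof of Lemma \ref{lemma: (A,a) is equivalent to LE(A,a)} in a dual fashion, with the inclusion $\i_A$ replaced by the quotient $\pi_A$. Since $A$ has finite length, the ascending chains $\{\a^l(0)\}_{l\in\N}$ and $\{\a^{-l}(0)\}_{l\in\N}$ of submodules of $A$ both stabilize, so I fix $k\in\N$ with $\a^l(0)=\a^k(0)$ and $\a^{-l}(0)=\a^{-k}(0)$ for every $l\geq k$; then (\ref{eq:generalized kernel decomposition}) gives $\gker\a=\a^k(0)+\a^{-k}(0)$. Writing $(B,\b):=\LM(A,\a)$, the candidate morphisms are
\begin{align*}
\phi&:=\pi_A\circ\a^k,\\
\psi&:=\a^k\circ\pi_A^{-1},
\end{align*}
and the goal is to show that $[\phi,k]\colon(A,\a)\mto(B,\b)$ and $[\psi,k]\colon(B,\b)\mto(A,\a)$ are mutually inverse isomorphisms in $\Szym\fr{R}$. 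After substituting $\b=\pi_A\circ\a\circ\pi_A^{-1}$ and aiming for literal equalities in (\ref{eq:Szymczak equivalence:psi phi = id})--(\ref{eq:Szymczak equivalence:phi psi = id}), the four conditions (\ref{eq:Szymczak equivalence:phi is a morphism})--(\ref{eq:Szymczak equivalence:phi psi = id}) reduce to
\begin{align*}
\text{(a)}\ &\pi_A\circ\a\circ\pi_A^{-1}\circ\pi_A\circ\a^k=\pi_A\circ\a^{k+1},\\
\text{(b)}\ &\a^k\circ\pi_A^{-1}\circ\pi_A\circ\a\circ\pi_A^{-1}=\a^{k+1}\circ\pi_A^{-1},\\
\text{(c)}\ &\a^k\circ\pi_A^{-1}\circ\pi_A\circ\a^k=\a^{2k},\\
\text{(d)}\ &\pi_A\circ\a^{2k}\circ\pi_A^{-1}=\b^{2k}.
\end{align*}

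Since $\pi_A^{-1}\circ\pi_A\supseteq\id_A$ (dually to $\i_A\circ\i_A^{-1}\subseteq\id_A$ used in the $\LE$ case), the inclusion LHS$\supseteq$RHS in each of (a), (b), (c) is immediate. Moreover, equation (d) will follow for free from the functoriality of $\LM$ (Proposition \ref{prop:Leray is a functor}) applied to $\a^{2k}\in\Endo\fr{R}((A,\a),(A,\a))$: one has $\pi_A\circ\a^{2k}\circ\pi_A^{-1}=\LM(\a^{2k})=\LM(\a)^{2k}=\b^{2k}$. The substantive work thus reduces to the reverse (LHS$\subseteq$RHS) inclusions in (a), (b), and (c).

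The key device for these is the following decomposition: for any $g\in\gker\a$ write $g=n+n'$ with $n\in\a^k(0)$ and $n'\in\a^{-k}(0)$; the membership of $n$ supplies a chain $0=e_0,e_1,\ldots,e_k=n$ under $\a$ (giving both $\apply{\a^k}{0}{n}$ and $\apply{\a}{e_{k-1}}{n}$ with $e_{k-1}\in\gker\a$), while that of $n'$ supplies a chain $n'=b_0,b_1,\ldots,b_k=0$ (giving $\apply{\a^k}{n'}{0}$ and $\apply{\a}{n'}{b_1}$ with $b_1\in\gker\a$). Linearity of $\a$ then lets me absorb any $\gker\a$-discrepancy in a putative chain. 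Concretely, for (a), given $\apply{\a^k}{x}{z}$, $z-z'=n+n'$, and $\apply{\a}{z'}{y'}$ with $[y']=[y]$, I would derive $\apply{\a^k}{x}{z-n}=\apply{\a^k}{x}{z'+n'}$ (from $\apply{\a^k}{0}{n}$) and then $\apply{\a}{z'+n'}{y'+b_1}$ (from $\apply{\a}{n'}{b_1}$), yielding $\apply{\a^{k+1}}{x}{y'+b_1}$ with $[y'+b_1]=[y]$. For (c), putting $w:=z-n=z'+n'$ yields both $\apply{\a^k}{x}{w}$ and $\apply{\a^k}{w}{y}$ by the same linearity (using $\apply{\a^k}{0}{n}$ and $\apply{\a^k}{n'}{0}$). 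For (b), dually, I would replace $y'$ by $y'-e_{k-1}$ and propagate $\{b_i\}$ termwise along a chain witnessing $\apply{\a^k}{w'}{x}$, assembling a $(k+1)$-chain from $y'-e_{k-1}$ to $x$. The hard part will be precisely this decomposition/rebuilding argument for (a) and (b): unlike the $\LE$ case, one cannot simply show that the intermediate element lies in $\gim\a$, nor appeal to the (in general false) equality $\gker\a=\a(\gker\a)$; instead, the split into $\a^k(0)$- and $\a^{-k}(0)$-parts, each exploited via a chain of the appropriate direction, closes the gap and crucially uses both Artinianity and Noetherianity of $A$.
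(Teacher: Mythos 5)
Your proposal is correct and, for the setup and for identities (a)--(c), essentially reproduces the paper's argument: the same morphisms $\phi=\pi_A\circ\a^k$ and $\psi=\a^k\circ\pi_A^{-1}$ with $k$ the stabilization index, the same reduction to four relation identities whose easy inclusions come from $\pi_A^{-1}\circ\pi_A\supseteq\id_A$, and the same key device of splitting an element of $\gker\a=\a^k(0)+\a^{-k}(0)$ into a forward part $n\in\a^k(0)$ and a backward part $n'\in\a^{-k}(0)$ and absorbing each by linearity. (The paper obtains (b) from (a) by replacing $\a$ with $\a^{-1}$ and taking inverses rather than arguing directly, but that is cosmetic.) Where you genuinely depart from the paper is (d): the paper proves $(\pi_A\circ\a\circ\pi_A^{-1})^{2k}=\pi_A\circ\a^{2k}\circ\pi_A^{-1}$ by a fairly long induction that rebuilds a $2k$-step $\a$-chain from a $2k$-step chain of classes, correcting the terms by elements of $\a^{2k}(0)$ and $\a^{-2k}(0)$; you instead observe that this identity is exactly $\LM(\a)^{2k}=\LM(\a^{2k})$ for the morphism $\a\colon(A,\a)\to(A,\a)$ in $\Endo\r{R}$, hence follows by induction from the already-established functoriality of $\LM$ (Proposition \ref{prop:Leray is a functor}). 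This is valid, not circular, and noticeably shorter; it also shows that (d) needs no finite-length hypothesis, which enters only in (a)--(c) through the decomposition $\gker\a=\a^k(0)+\a^{-k}(0)$. Two small quibbles: your closing remark that the argument ``crucially uses both Artinianity and Noetherianity'' overstates things, since both chains $\{\a^l(0)\}_l$ and $\{\a^{-l}(0)\}_l$ are ascending and only the Noetherian condition is used here (Artinianity is what the companion Lemma \ref{lemma: (A,a) is equivalent to LE(A,a)} needs); and your sketch of (b) is the vaguest part of the write-up, though either the inversion symmetry or the direct argument you outline closes it without difficulty.
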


\begin{proof}
    Let $(B,\b)=\LM(A,\a)$. Since $A$ is of finite length, there exists $k\in\N$ such that $\a^k(0)=\a^l(0)$ and $\a^{-k}(0)=\a^{-l}(0)$ for all $l\geq k$. We set
\begin{align*}
    \phi&:=\pi_A\circ\a^k\\
    \psi&:=\a^k\circ\pi_A^{-1}.
\end{align*}
Our goal is to prove that $[\phi,k]\colon(A,\a)\mto(B,\b)$ and $[\psi,k]\colon(B,\b)\mto(A,\a)$ are mutually inverse isomorphisms, i.e. the formulas (\ref{eq:Szymczak equivalence:phi is a morphism}) - (\ref{eq:Szymczak equivalence:phi psi = id}) hold. Since $\b=\pi_A\circ\a\circ\pi_A^{-1}$, we can rewrite them only in terms of $\a$ and $\pi_A$.
\begin{align*}
    \tag{a}\label{lemma: (A,a) is equivalent to LM(A,a) helper a}
   (\pi_A\circ\a\circ\pi_A^{-1})\circ(\pi_A\circ\a^k)&=(\pi_A\circ\a^k)\circ\a\\
    \tag{b}\label{lemma: (A,a) is equivalent to LM(A,a) helper b}
    (\a^k\circ\pi_A^{-1})\circ(\pi_A\circ\a\circ\pi_A^{-1})&=\a\circ(\a^k\circ\pi_A^{-1})\\
    \tag{c}\label{lemma: (A,a) is equivalent to LM(A,a) helper c}
    (\a^k\circ\pi_A^{-1})\circ(\pi_A\circ\a^k)&=\a^{2k}\\
    \tag{d}\label{lemma: (A,a) is equivalent to LM(A,a) helper d}
    (\pi_A\circ\a\circ\pi_A^{-1})^{2k}&=(\pi_A\circ\a^k)\circ(\a^k\circ\pi_A^{-1})
\end{align*}
Similarly as in the proof of Lemma \ref{lemma: (A,a) is equivalent to LE(A,a)}, we see that the right-to-left inclusion is satisfied in all of the formulas above since $\pi_A^{-1}\circ\pi_A\supseteq\id_A$.

In order to prove the opposite inclusion in (\ref{lemma: (A,a) is equivalent to LM(A,a) helper a}) let $x,y\in A$ be such that $\apply{(\pi_A\circ\a\circ\pi_A^{-1}\circ\pi_A\circ\a^k)}{x}{[y]}$. Without loss of generality, let $\apply{(\a\circ\pi_A^{-1}\circ\pi_A\circ\a^k)}{x}{y}$. There exist $z,z'\in A$ such that $\apply{\a^k}{x}{z}$, $\apply{(\pi_A^{-1}\circ\pi_A)}{z}{z'}$ and $\apply{\a}{z'}{y}$. Hence $z-z'\in\gker\a$. Using the definition of $k$ and (\ref{eq:generalized kernel decomposition}) we deduce that there exist $n\in\a^k(0)$ and $n'\in\a^{-k}(0)$ such that $z-z'=n+n'$. There exist $m'\in\a^{-k+1}(0)$ such that $\apply{\a}{n'}{m'}$. By linearity of $\a$ we get that $\apply{\a^k}{x}{(z-n)}$ and $\apply{\a}{(z'+n')}{(y+m')}$. Moreover, $[y]=[y+m']$, since $m'\in\gker\a$. Therefore $\apply{(\pi_A\circ\a^{k+1})}{x}{[y]}$.

Similarly as in the proof of Lemma \ref{lemma: (A,a) is equivalent to LE(A,a)}, notice that the right-to-left inclusion in (\ref{lemma: (A,a) is equivalent to LM(A,a) helper b}) follows from the equality (\ref{lemma: (A,a) is equivalent to LM(A,a) helper a}) with $\a$ replaced with $\a^{-1}$ and then taking the inverse on both sides.

Now we prove the left-to-right inclusion in (\ref{lemma: (A,a) is equivalent to LM(A,a) helper c}). Let $x,y\in A$ be such that $\apply{(\a^k\circ\pi_A^{-1}\circ\pi_A\circ\a^k)}{x}{y}$. Let $z,z'\in A$ be such that $\apply{\a^k}{x}{z}$, $\apply{(\pi_A^{-1}\circ\pi_A)}{z}{z'}$ and $\apply{\a^k}{z'}{y}$. Let $n\in\a^k(0)$ and $n'\in\a^{-k}(0)$ such that $z-z'=n+n'$. Using the linearity of $\a$ we deduce that $\apply{\a^k}{x}{(z-n)}$ and $\apply{\a^k}{(z'+n')}{y}$. Thus $\apply{\a^{2k}}{x}{y}$, since $z-n=z'+n'$.

Finally we prove the opposite inclusion in (\ref{lemma: (A,a) is equivalent to LM(A,a) helper d}).
Let $x,y\in A$ be such that $\apply{(\pi_A\circ\a\circ\pi_A^{-1})^{2k}}{[x]}{[y]}$. It means that there exists a sequence $\left\{z_l\right\}_{l=0}^{l=2k}$ such that $[z_0]=[x]$, $[z_{2k}]=[y]$ and $\apply{(\pi_A\circ\a\circ\pi_A^{-1})}{[z_l]}{[z_{l+1}]}$ for $l=0,1,\ldots, 2k-1$. Now we proceed by induction. We prove that for each $j=0,1,\ldots, 2k$ there exists a sequence $\left\{u_l\right\}_{l=0}^{2k}$ such that $[u_0]=[x]$, $[u_{2k}]=[y]$, $\apply{(\pi_A\circ\a\circ\pi_A^{-1})}{[u_l]}{[u_{l+1}]}$ for $l=0,1,\ldots, 2k-1$ and $\apply{\a}{u_l}{u_{l+1}}$ for $0\leq l<j$. Clearly, the base of the induction is satified. Now fix $j\in\{0,1,\ldots, 2k\}$ and let $\left\{u_l\right\}_{l=0}^{2k}$ be the sequence described above. Then $\apply{(\pi_A\circ\a\circ\pi_A^{-1})}{[u_j]}{[u_{j+1}]}$, that is there exists $s, t\in A$ such that $[s]=[u_j]$, $[t]=[u_{j+1}]$ and $\apply{\a}{s}{t}$. Using (\ref{eq:generalized kernel decomposition}) and the fact that $\gker\a=\a^{2k}(0)+\a^{-2k}(0)$, we deduce that there exist $n\in\a^{2k}(0)$ and $n'\in\a^{-2k}(0)$ such that $u_j-s=n+n'$. Let $m\in\a^{2k-j}(0)$ be such that $\apply{\a^j}{m}{n}$ and $m'\in\a^{-2k+1}(0)$ such that $\apply{\a}{n'}{m'}$. Notice that $\apply{\a^j}{u_0}{u_j}$, hence by linearity of $\a$ we get that $\apply{\a^j}{(u_0-m)}{(u_j-n)}$ and $\apply{\a}{(s+n')}{(t+m')}$. We define $v_0:=u_0-m$, $v_j:=u_j-n$, $v_{j+1}:=t+m'$ and $v_l:=u_l$ for $l=j+2,j+3,\ldots, 2k$. The terms $v_l$ for $l=1,\ldots, j-1$ are impilcitly defined using the formula $\apply{\a^j}{v_0}{v_j}$. Thus we have proved the inductive step. It means that for $j=2k$ we obtain a sequence $\left\{w_l\right\}_{l=0}^{2k}$ such that $[w_0]=[x]$, $[w_{2k}]=[y]$ and $\apply{\a}{w_l}{w_{l+1}}$ for $l=0,1,\ldots, 2k-1$, thus the left-to-right inclusion in (\ref{lemma: (A,a) is equivalent to LM(A,a) helper d}) is proven.
\end{proof}

Now, the proof of Theorem \ref{thm:Special Szymczak representative} is an easy consequence of Lemmas \ref{lemma: (A,a) is equivalent to LE(A,a)} and \ref{lemma: (A,a) is equivalent to LM(A,a)}. Accordingly, Theorem \ref{thm:Leray and Szymczak equivalence} follows easily.

\begin{theorem}[Theorem \ref{thm:Leray and Szymczak equivalence}]
    
    Let $(A,a),\ (B,\b)\in\Endo\fr{R}$. Then $(A,\a)\iso{\Szym} (B,\b)$ if and only if $L(A,\a)\iso{\Endo} L(B,\b)$
\end{theorem}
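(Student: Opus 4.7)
The plan is to obtain both directions as a quick consequence of Theorems \ref{thm:Special Szymczak representative} and \ref{thm:Leray is normal} together with the universal property recorded in Theorem \ref{thm:Szymczak functor is the universal normal functor}; no new explicit morphism constructions are required.

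For the forward implication, I would argue as follows. Since $L\colon\Endo\fr{R}\to\Endo\fr{R}$ is a normal functor by Theorem \ref{thm:Leray is normal}, the universal property (Theorem \ref{thm:Szymczak functor is the universal normal functor}, taken with $\cD=\Endo\fr{R}$) yields a functor $L'\colon\Szym\fr{R}\to\Endo\fr{R}$ satisfying $L=L'\circ\Szym$. Any functor preserves isomorphisms, so if $(A,\a)\iso{\Szym}(B,\b)$ then $L'(A,\a)\iso{\Endo}L'(B,\b)$, which is exactly $L(A,\a)\iso{\Endo}L(B,\b)$.

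For the backward implication, note that $\Szym\colon\Endo\fr{R}\to\Szym\fr{R}$ is itself a functor, so any assumed $\Endo$-isomorphism between $L(A,\a)$ and $L(B,\b)$ is sent to a $\Szym$-isomorphism, giving $L(A,\a)\iso{\Szym}L(B,\b)$. Chaining this with Theorem \ref{thm:Special Szymczak representative}, which supplies the two outer isomorphisms, we would obtain
$$
(A,\a)\iso{\Szym}L(A,\a)\iso{\Szym}L(B,\b)\iso{\Szym}(B,\b),
$$
and transitivity of $\iso{\Szym}$ closes the argument.

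Essentially all of the difficulty has already been absorbed into Theorem \ref{thm:Special Szymczak representative}, whose proof depends on the finite-length hypothesis to stabilize the chains $\{\a^l(A)\}$, $\{\a^{-l}(A)\}$, $\{\a^l(0)\}$ and $\{\a^{-l}(0)\}$ (Examples \ref{ex:failure of equivalence not artinian} and \ref{ex:failure of equivalence not noetherian} show that without this stabilization the theorem itself fails). The only subtlety at this stage is ensuring that $L$ actually lands in $\Endo\fr{R}$, which is guaranteed because $\fr{R}$ is closed under subobjects and quotients, as observed at the start of Section \ref{sec:general_case}; this is what lets us apply the universal property with codomain $\Endo\fr{R}$ rather than $\Endo\r{R}$.
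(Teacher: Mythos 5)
Your proposal is correct and follows essentially the same route as the paper: the forward direction is exactly the paper's earlier observation that normality of $L$ (Theorem \ref{thm:Leray is normal}) plus the universal property (Theorem \ref{thm:Szymczak functor is the universal normal functor}) forces $L$ to send $\Szym$-isomorphic objects to $\Endo$-isomorphic ones, and the backward direction is the paper's chain $(A,\a)\iso{\Szym}L(A,\a)\iso{\Szym}L(B,\b)\iso{\Szym}(B,\b)$ via Theorem \ref{thm:Special Szymczak representative}. Your version is slightly more explicit than the paper's (which relegates the forward implication to a remark before Example \ref{ex:failure of equivalence not artinian}), but the content is identical.
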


\begin{proof}
    Let $(A,\a),\ (B,\b)\in\Endo\fr{R}$ be such that $L(A,\a)\iso{\Endo} L(B,\b)$. In particular, it means that $L(A,\a)\iso{\Szym} L(B,\b)$. By Theorem \ref{thm:Special Szymczak representative}, we have $(A,\a)\iso{\Szym} L(A,\a)$ and $(B,\b)\iso{\Szym} L(B,\b)$. Combining those isomorphisms in $\Szym$ category yields $(A,\a)\iso{\Szym}(B,\b)$.
\end{proof}

It turns out that the object with the bijection is unique - as stated in Theorem \ref{thm: Szymczak on auto C is identity functor}
\begin{theorem}[Theorem \ref{thm: Szymczak on auto C is identity functor}]
    Let $(A,a),\ (B,\b)\in\Endo\fr{R}$ be such that $\a$ and $\b$ are bijections. If $(A,\a)\iso{\Szym} (B,\b)$ then $(A,\a)\iso{\Endo} (B,\b)$.
\end{theorem}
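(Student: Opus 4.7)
The plan is to reduce the statement directly to Theorem \ref{thm:Leray and Szymczak equivalence} by showing that when the structure morphism is a bijection, the Leray functor acts as the identity on that object.

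First I would verify that a bijection $\a\colon A\mto A$ has trivial generalized kernel and full generalized image. Since $\a$ is a matching and a correspondence, so is $\a^{-1}$, and the composition of matchings (respectively correspondences) is again a matching (respectively a correspondence). Hence every iterate $\a^l$ for $l\in\Z$ is a bijection. In particular, Proposition \ref{prop:matching characterization} gives $\a^l(0)=\{0\}$ for every $l\in\Z$, which combined with \eqref{eq:generalized kernel decomposition} yields $\gker\a=\{0\}$. Analogously, totality and surjectivity of every iterate give $\a^l(A)=A$ for all $l$, so $\gim\a=\bigcap_{l\in\Z}\a^l(A)=A$.

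Next I would observe that under these equalities the canonical projection $\pi_A\colon A\to A/\gker\a$ and the canonical inclusion $\i_A\colon\gim\a\hookrightarrow A$ are both equal to $\id_A$. Plugging this into the definitions of $\LM$ and $\LE$ gives $\LM(A,\a)=(A,\pi_A\circ\a\circ\pi_A^{-1})=(A,\a)$ and then $\LE\LM(A,\a)=(A,\i_A^{-1}\circ\a\circ\i_A)=(A,\a)$, so $L(A,\a)=(A,\a)$. The identical argument produces $L(B,\b)=(B,\b)$.

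Finally I would invoke Theorem \ref{thm:Leray and Szymczak equivalence}: from $(A,\a)\iso{\Szym}(B,\b)$ we conclude $L(A,\a)\iso{\Endo} L(B,\b)$, and by the previous step this isomorphism is literally $(A,\a)\iso{\Endo}(B,\b)$. There is no real obstacle here; the only content beyond invoking the already-established equivalence of $L$ and $\Szym$ on $\Endo\fr{R}$ is the small calculation that $L$ fixes objects whose endomorphism is bijective, which is immediate from Proposition \ref{prop:matching characterization} once one notes that all powers of a bijective linear relation remain bijections.
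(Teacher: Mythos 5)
Your proposal is correct and follows essentially the same route as the paper: deduce $L(A,\a)\iso{\Endo}L(B,\b)$ from the Szymczak isomorphism (the paper cites the universality of $\Szym$ directly, which is exactly what underlies the direction of Theorem \ref{thm:Leray and Szymczak equivalence} you invoke) and then observe that $L$ fixes objects with bijective structure morphism. Your explicit verification that $\gker\a=\{0\}$ and $\gim\a=A$ usefully fleshes out the paper's ``from the construction of $L$''; the only cosmetic quibble is that $\pi_A\colon A\to A/\{0\}$ is a canonical isomorphism rather than literally $\id_A$, so one gets $L(A,\a)\iso{\Endo}(A,\a)$ rather than equality, which is all that is needed.
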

\begin{proof}
Let $(A,\a),\ (B,\b)\in\Endo\fr{R}$ be such that $\a$ and $\b$ are bijections and $(A,\a)\iso{\Szym}(B,\b)$. From Theorem \ref{thm:Szymczak functor is the universal normal functor} we deduce that $L(A,\a)\iso{\Endo} L(B,\b)$. But since $\a$ and $\b$ are bijections, $L(A,\a)\iso{\Endo}(A,\a)$ and $L(B,\b)\iso{\Endo}(B,\b)$ from the construction of $L$. Hence $(A,\a)\iso{\Endo}(B,\b)$.
\end{proof}

Finally, notice that in $\Endo\fr{R}$ the order of compositions of $\LE$ and $\LM$ does not matter.

\begin{corollary}
    \label{col:LE and LM commute}
    For any $(A,\a)\in \Endo\fr{R}$ we have $\LE\circ\LM(A,\a)\iso{\Endo}\LE\circ\LM(A,\a)$.
\end{corollary}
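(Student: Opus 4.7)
The plan is to show that both orders of composition give objects that are Szymczak-equivalent to the starting $(A,\a)$, and then promote this Szymczak-equivalence to an $\Endo$-isomorphism using Theorem \ref{thm: Szymczak on auto C is identity functor}. (I read the statement as claiming $\LE\circ\LM(A,\a)\iso{\Endo}\LM\circ\LE(A,\a)$, the displayed formula appearing to contain a typo.)

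First I would apply the two preceding lemmas in both orders. By Lemma \ref{lemma: (A,a) is equivalent to LM(A,a)}, we have $(A,\a)\iso{\Szym}\LM(A,\a)$, and then applying Lemma \ref{lemma: (A,a) is equivalent to LE(A,a)} to $\LM(A,\a)$ gives $\LM(A,\a)\iso{\Szym}\LE\circ\LM(A,\a) = L(A,\a)$; composing yields $(A,\a)\iso{\Szym}L(A,\a)$ (this is just Theorem \ref{thm:Special Szymczak representative}). In the other direction, Lemma \ref{lemma: (A,a) is equivalent to LE(A,a)} gives $(A,\a)\iso{\Szym}\LE(A,\a)$, and applying Lemma \ref{lemma: (A,a) is equivalent to LM(A,a)} to $\LE(A,\a)$ gives $\LE(A,\a)\iso{\Szym}\LM\circ\LE(A,\a)$. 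Transitivity in $\Szym\fr{R}$ then yields $\LE\circ\LM(A,\a)\iso{\Szym}\LM\circ\LE(A,\a)$.

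To invoke Theorem \ref{thm: Szymczak on auto C is identity functor} I need both objects to have bijective endomorphisms. For $\LE\circ\LM(A,\a)=L(A,\a)$ this is exactly Theorem \ref{thm:Leray is normal}. For $\LM\circ\LE(A,\a)$ I would argue as follows. Write $(B,\b):=\LE(A,\a)$. The remarks immediately preceding Lemma \ref{lemma: (A,a) is equivalent to LE(A,a)} show that, under the finite length hypothesis, $\gim\a\subseteq\a(\gim\a)$ and $\gim\a\subseteq\a^{-1}(\gim\a)$, so $\b$ is total and surjective, i.e.\ a correspondence. Totality and surjectivity are preserved by $\LM$: indeed, $\pi_B\circ\b\circ\pi_B^{-1}$ is total because for every $[x]$ the set $\b(x)$ is nonempty and its image under $\pi_B$ lies in the image of $\pi_B\circ\b\circ\pi_B^{-1}$ at $[x]$, and surjectivity follows symmetrically. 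On the other hand, the proof of Theorem \ref{thm:Leray is normal} shows that $\LM$ applied to any object yields a matching (this argument used only the inclusions \eqref{gim and gker inclusions 3}, \eqref{gim and gker inclusions 4}, which hold unconditionally). Hence the endomorphism of $\LM\circ\LE(A,\a)$ is simultaneously a matching and a correspondence, i.e.\ a bijection.

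The main obstacle is the last bullet: one must check that applying $\LM$ after $\LE$ really does produce a bijection in $\Endo\fr{R}$, since in general $\LM\circ\LE$ is not normal (as the paper notes, and as is addressed in the Appendix). The key input that rescues the situation is the observation, cited above and valid precisely because $A$ has finite length, that $\LE(A,\a)$ is already a correspondence. Once this is in hand, Theorem \ref{thm: Szymczak on auto C is identity functor} applied to the Szymczak-isomorphism obtained in the previous paragraph upgrades it to the desired $\Endo$-isomorphism $\LE\circ\LM(A,\a)\iso{\Endo}\LM\circ\LE(A,\a)$.
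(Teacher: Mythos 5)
Your proposal is correct and follows essentially the same route as the paper: both derive $\LM\circ\LE(A,\a)\iso{\Szym}\LE\circ\LM(A,\a)$ from Lemmas \ref{lemma: (A,a) is equivalent to LE(A,a)} and \ref{lemma: (A,a) is equivalent to LM(A,a)} and then upgrade this to an isomorphism in $\Endo\fr{R}$ via Theorem \ref{thm: Szymczak on auto C is identity functor}. In fact you supply a justification the paper omits, namely the verification that $\LM\circ\LE(A,\a)$ is a bijection (using that, by finite length, $\LE(A,\a)$ is already a correspondence and that $\LM$ always yields a matching), whereas the paper simply asserts that both composites are matchings and correspondences.
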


\begin{proof}
    Let $(A,\a)\in\Endo{\fr{R}}$. From Lemmas \ref{lemma: (A,a) is equivalent to LE(A,a)} and \ref{lemma: (A,a) is equivalent to LM(A,a)} we deduce that $(A,\a)\iso{\Szym}\LM\circ\LE(A,\a)$ and $(A,\a)\iso{\Szym}\LE\circ\LM(A,\a)$. Therefore, we have $\LM\circ\LE(A,\a) \iso{\Szym} \LE\circ\LM(A,\a)$. Since $\LM\circ\LE(A,\a)$ and $\LE\circ\LM(A,\a)$ are simultaneously matchings and correspondences, $\LM\circ\LE(A,\a)\iso{\Endo}\LE\circ\LM(A,\a)$ follows from Theorem \ref{thm: Szymczak on auto C is identity functor}. 
\end{proof}

%

\appendix
\section{Appendix}
\label{app:counterexample}

In this appendix we present an example demonstrating that the 
assumptions in Proposition \ref{prop:gim and gker inclusions} and
Corollary \ref{col:LE and LM commute} cannot be relaxed,
which implies that the order of compositions in 
the definition of the Leray functor 
(see (\ref{eq:Leray functor})) cannot be reversed.

\begin{figure}[h]
\begin{tikzcd}
                     &                      &                                      & e_0 \arrow[rd, shift left=3] \arrow[rdd, shift left=2] \arrow[rddd, shift left] \arrow[rdddd] &                     &                     &           \\
                     &                      & {e_{1,-1}} \arrow[ru, shift left=2]  &                                                                                               & {e_{1,1}}           &                     &           \\
                     & {e_{2,-2}} \arrow[r] & {e_{2,-1}} \arrow[ruu, shift left=2] &                                                                                               & {e_{2,1}} \arrow[r] & {e_{2,2}}           &           \\
{e_{3,-3}} \arrow[r] & {e_{3,-2}} \arrow[r] & {e_{3,-1}} \arrow[ruuu, shift left]  &                                                                                               & {e_{3,1}} \arrow[r] & {e_{3,2}} \arrow[r] & {e_{3,3}} \\
                     &                      & \vdots \arrow[ruuuu]                 &                                                                                               & \vdots              &                     &          
\end{tikzcd}
\caption{Schematic depiction of the relation $\beta$ considered in the Appendix}
\label{fig:spider}
\end{figure}
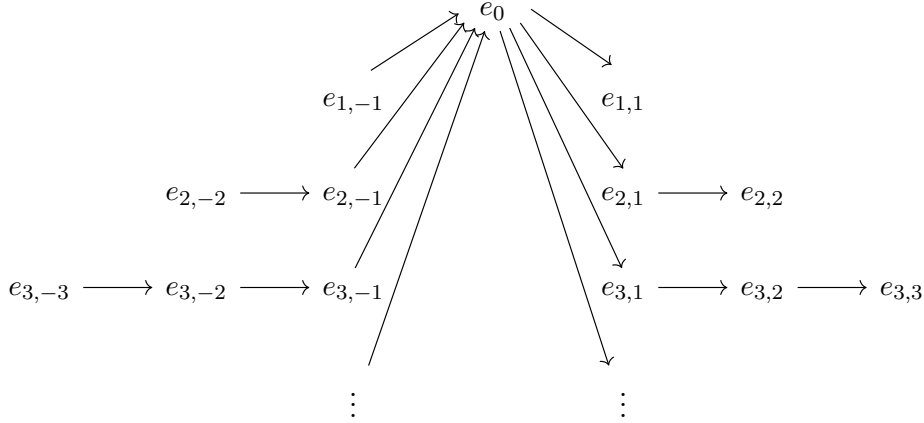

For $k\in\N_+$ let $E_k:=\{e_{k,s}\colon s=-k, -k+1, \ldots, k-1\}$,
where $e_{k,0}=e_{k',0}$ for $k,k'\in\N_+$.
Therefore $e_0:=e_{k,0}$ (where $k\in\N_+$ is arbitrary) is well-defined.
We want to think of $E_k$ as an orbit of length $2k$.
Let $\b_k$ be a relation on $E_k$ defined as 
$\apply{\b}{e_{k, s}}{e_{k, s+1}}$ for $-k\leq s < k$.
Thus $\b:=\bigsqcup_{k\in\N_+} \b_k$ is a relation (not yet linear)
on $E:=\bigsqcup_{k\in\N_+} E_k$.
See Figure \ref{fig:spider} for a schematic depiction of $\b$.
Observe that $\b^i(e_0)\neq \emptyset$ for $i\in \mathbb{Z}$,
however there is no infinite sequence $\{x_i\}_{i\in\mathbb{Z}}$
such that $\apply{\b}{x_i}{x_{i+1}}$ and $x_0=e_0$ 
(note that existence of such sequence is crucial in the proof of 
Proposition \ref{prop:gim and gker inclusions}).
In order to satisfy the formal constraints, we need to consider
a linear relation. 
Let $A$ be a free $\mathbb{Z}_2$-vector space with basis $E$
and let 
\begin{equation}
  \a:=\generatedby{\b}\subseteq A\oplus A
  \label{eq:alpha definition}
\end{equation}
be a subspace 
generated by the subset $\b$. 
For $k\in\N$ let $\N_k$ be the set of natural numbers 
greater or equal than $k$.

\begin{lemma}
  \label{couterexample:lem:determining-a(0)}
  For $\a$ defined in (\ref{eq:alpha definition})
  the following equalities
  \begin{align*}
    \a(0)=
    &\left\{
        \sum_{s\in S}e_{s,1}\colon S\subseteq\N_1
        \card{S}\: \text{is even}
    \right\},\\
    \a^{-1}(0)=
    &\left\{
        \sum_{s\in S}e_{s,-1}\colon S\subseteq\N_1,
        \card{S}\: \text{is even}
    \right\}
  \end{align*}
  hold.
\end{lemma}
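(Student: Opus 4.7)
The whole proof is a bookkeeping calculation in the basis $E$ of $A$; the key observation is that $e_0$ is the unique basis vector shared between different chains, so outside $e_0$ every basis element occurs in at most one of the generators $(e_{k,s}, e_{k,s+1})$ of $\a$ in either coordinate.

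First, I would write a generic element of $\a$ as a finite $\Z_2$-linear combination of the generators,
\[
    \xi \;=\; \sum_{(k,s)} c_{k,s}\,(e_{k,s},\, e_{k,s+1}), \qquad c_{k,s}\in\Z_2,
\]
whose two projections are $u = \sum c_{k,s}\,e_{k,s}$ and $v = \sum c_{k,s}\,e_{k,s+1}$. By definition $\a(0) = \{v : u = 0\}$ and $\a^{-1}(0) = \{u : v = 0\}$, so the task is to characterize, for each side separately, the coefficient systems that annihilate the corresponding projection.

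For $\a(0)$: for every index $(k,s)$ with $s\neq 0$ the basis vector $e_{k,s}$ appears in $u$ only through the single generator at $(k,s)$, so $u=0$ forces $c_{k,s}=0$ whenever $s\neq 0$. The only surviving generators are $(e_0,\, e_{k,1})$ for $k\in\N_1$, which all share the same left entry $e_0$; hence $u=0$ reduces to the parity condition $\sum_k c_{k,0} = 0$ in $\Z_2$, that is, $S := \{k : c_{k,0}=1\}$ must have even cardinality. Substituting back gives $v = \sum_{k\in S} e_{k,1}$, which is the claimed description of $\a(0)$; conversely, any such $v$ is realized by $\sum_{k\in S}(e_0,\, e_{k,1}) \in \a$.

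The formula for $\a^{-1}(0)$ follows by the symmetric argument applied to $v$: for $s\neq -1$ the basis vector $e_{k,s+1}$ is unique to the generator at $(k,s)$, so $v=0$ forces $c_{k,s}=0$ for $s\neq -1$; the remaining generators $(e_{k,-1},\, e_0)$ all share the right entry $e_0$, yielding the same even-parity condition and producing $u = \sum_{k\in S} e_{k,-1}$ with $\card S$ even. The only care required is identifying which basis elements can participate in cancellation --- and this happens only at the shared hub $e_0$ --- so there is no substantive obstacle beyond this bookkeeping.
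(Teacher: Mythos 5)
Your proof is correct, and it takes a genuinely more direct route than the paper's. The paper also begins by writing $0\oplus w=\sum_{j\in J}g_j\oplus h_j$ as a sum of generators, but then argues by induction on the number of basis terms of $w$: it partitions $J$ into the fibres $J_i=\{j\colon h_j=f_i\}$ and a remainder $J'$, proves $J'=\emptyset$ and that each $\card{J_i}$ is odd, locates two generators sharing the source $e_0$, and peels off $f_{i_0}+f_{i_1}$ to invoke the induction hypothesis. You instead equate coefficients in the basis $E$ directly: since every basis vector other than $e_0$ is the source (respectively, the target) of at most one generator, the condition $u=0$ immediately forces $c_{k,s}=0$ for all $s\neq 0$ and collapses to the single parity constraint $\sum_k c_{k,0}=0$ at the hub $e_0$; the converse inclusion is the one-line computation $\sum_{k\in S}(e_0,e_{k,1})\in\a$ with $\card{S}$ even. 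This eliminates the induction and the $J_i$/$J'$ bookkeeping entirely, at no loss of rigour: the only hypothesis you need is that every element of $\a=\generatedby{\b}$ admits \emph{some} representation as a $\Z_2$-combination of generators (uniqueness is never used), which holds by definition and which you state. One phrasing nit: your opening claim that outside $e_0$ each basis element ``occurs in at most one of the generators in either coordinate'' must be read per coordinate --- a vector $e_{k,s}$ with $0<s<k$ does occur in two generators, once as a source and once as a target --- but your actual argument only ever uses the per-coordinate version, which is correct.
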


\begin{proof}
    The right-to-left inclusion in the first equality is obvious.
    To prove the opposite one, let $w\in\a(0)$. 
    Then $w=\sum_{i\in I}f_i$, where $f_i\in E$. 
    Without loss of generality $I$ is such that if $f_{i_0}=f_{i_1}$
    then $i_0=i_1$.  
    We proceed by induction on $\card{I}$.
    If $\card{I}=0$, then $w=0$ and the lemma is satisfied.

    For the induction step assume that the lemma is true
    for all $w'=\sum_{i\in I'}f_{i'}'$ such that $\apply{\a}{0}{w'}$
    and $\card{I'}<\card{I}$.
    Note that ${0\oplus w=\sum_{j\in J} g_j\oplus h_j}$,
    where $g_j, h_j\in E$ and $\apply{\b}{g_j}{h_j}$,
    since $\apply{\a}{0}{w}$.
    Similarly as above, $J$ can be reduced to the case that
    if $g_{j_0}\oplus h_{j_0}=g_{j_1}\oplus h_{j_1}$, then $j_0=j_1$.
    However, it is still possible that $h_{j_0}=h_{j_1}$ for $j_0 \neq j_1$.
    For $i\in I$ define
    $$
        J_i:=\{j\in J\colon h_j=f_i\}
    $$
    and $J'=J\setminus\bigcup_{i\in I} J_i$.
    Note that sets $J_i$ are mutually disjoint.
    If $j\in J_{i_0}\cap J_{i_1}$, then $h_j=f_{i_0}=f_{i_1}$,
    thus $i_0=i_1$ by the assumption on $I$. 
    Consequently, we can write $J=J'\sqcup\bigsqcup_{i\in I} J_i$, hence

    $$
        \sum_{i\in I}f_i=
        \sum_{j\in J}h_j=
        \left(\sum_{i\in I}\sum_{j\in J_i}h_j\right)+
        \sum_{j\in J'}h_j=\sum_{i\in I}\left(\card{J_i}\right)f_i+
        \sum_{j\in J'}h_j.
    $$
    Since $f_i$ and $h_j$ are linearly independent,
    we deduce that $\card{J_i}$ is odd for all $i\in I$ 
    and that $\sum_{j\in J'}h_j=0$. 
    Now we claim that actually $J'=\emptyset$, 
    thus ${J=\bigsqcup_{i\in I}J_i}$. 
    If $j_0\in J'$, there exists $j_1\in J'$ such that $j_0\neq j_1$ 
    but $h_{j_0}=h_{j_1}$ since $\sum_{j\in J'}h_j=0$. 
    By the assumption on $J$ we deduce that $g_{j_0}\neq g_{j_1}$.
    This yields four formulae:

    \begin{align*}
        \begin{cases}
            h_{j_0}=h_{j_1},\\
            g_{j_0}\neq g_{j_1},\\
            \apply{\b}{g_{j_0}}{h_{j_0}},\\
            \apply{\b}{g_{j_1}}{h_{j_1}}.
        \end{cases}
    \end{align*}
    They can be satisfied only if 
    $h_{j_0}=h_{j_1}=e_0$ and $g_{j_0}=e_{s_0, -1}$,
    $g_{j_1}=e_{s_1, -1}$ for some $s_0, s_1 \in\N_1$.
    Since $\sum_{j\in J}g_j=0$, we deduce that 
    there exists $j_2\in J$ such that $j_0\neq j_2$
    but $g_{j_2}=g_{j_0}$.
    Since $g_{j_0}=e_{s_0, -1}$, the only choice 
    for $h_{j_2}$ is $e_0$. 
    This contradicts the assumption on $J$.
    Thus $J'=\emptyset$.

    To finish the proof we use induction hypothesis. 
    Using the fact that $I\neq\emptyset$, let $i_0\in I$. 
    Since $\card{J_{i_0}}$ is odd, we deduce that $J_{i_0}\neq\emptyset$.
    Let $j_0\in J_{i_0}$. Again, since $\sum_{j\in J}g_j=0$, 
    there exists $j_1\in J$ such that $j_0\neq j_1$ and $g_{j_0}=g_{j_1}$.
    If $j_1\in J_{i_0}$, then $h_{j_0}=h_{j_1}=f_{i_0}$,
    violating the assumption on $J$.
    Thus there exists $i_1\in I$ such that $i_0\neq i_1$ and $j_1\in J_{i_1}$,
    since $J=\bigsqcup_{i\in I} J_i$. The formulae

    \begin{align*}
        \begin{cases}
            h_{j_0}\neq h_{j_1},\\
            g_{j_0}=g_{j_1},\\
            \apply{\b}{g_{j_0}}{h_{j_0}},\\
            \apply{\b}{g_{j_1}}{h_{j_1}}  
        \end{cases}
    \end{align*}
    can be satisfied only if $g_{j_0}=g_{j_1}=e_0$, $h_{j_0}=e_{s_0, 1}$
    and $h_{j_1}=e_{s_1, 1}$ for some $s_0,s_1\in\N_+$.
    However, for 
    $
      w':=w+h_{j_0}+h_{j_1}=w+f_{i_0}+f_{i_1}=
        \sum_{i\in I\setminus\{i_0, i_1\}} f_i
    $ we have

    $$
        0\oplus w'= \sum_{j\in J\setminus\{j_0, j_1\}} g_j\oplus h_j,
    $$
    since $g_{j_0}+g_{j_1}=0$ and $(g_{j_0}+g_{j_1})\a (h_{j_0}+h_{j_1})$. 
    Thus by the induction step the first equality is proven. 
    Proof of the second one is analogous.
\end{proof}

\begin{lemma}
    \label{couterexample:lem:determining-ak(0)}
    For $\a$ defined in (\ref{eq:alpha definition}) and $k\in\N_+$ 
    the following equalities
    \begin{align*}
        \a^k(0)=
        &{\left\{
            \sum_{i=1}^k \sum_{s_i\in S_i} e_{s_i, i}\colon
            \text{$\card{S_i}$ is even for all $i=1,\ldots, k$}
        \right\}},\\
        \a^{-k}(0)=
        &{\left\{
            \sum_{i=1}^k \sum_{s_i\in S_i} e_{s_i, -i}\colon
            \text{$\card{S_i}$ is even for all $i=1,\ldots,k$}
        \right\}}
    \end{align*}
    hold.
\end{lemma}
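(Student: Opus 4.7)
The plan is to proceed by induction on $k$, with the base case $k=1$ being exactly Lemma \ref{couterexample:lem:determining-a(0)}. For the inductive step from $k$ to $k+1$, I would use the identity $\alpha^{k+1}(0) = \alpha(\alpha^k(0))$. The key structural observation is that, since $\alpha = \generatedby{\beta}$ as a $\Z_2$-submodule, every pair $(u, w) \in \alpha$ admits a representation $u = \sum_{(s, i) \in P} e_{s, i}$ and $w = \sum_{(s, i) \in P} e_{s, i+1}$, where $P$ is a finite subset of the arrow set $\{(s, i) : s \in \N_1,\ -s \leq i < s\}$. Once this parametrization is in place, the whole argument reduces to parity analysis of the coefficients of basis vectors in $u$ and $w$, in the same spirit as the proof of Lemma \ref{couterexample:lem:determining-a(0)}.

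For the right-to-left inclusion of the inductive step, given $w = \sum_{i=1}^{k+1}\sum_{s \in S_i} e_{s, i}$ with each $\card{S_i}$ even, I would exhibit an explicit preimage. Set $u := \sum_{i=1}^{k}\sum_{s \in S_{i+1}} e_{s, i}$, which lies in $\alpha^k(0)$ by the inductive hypothesis because each $\card{S_{i+1}}$ is even, and let $P := \{(s, 0) : s \in S_1\} \cup \{(s, i) : 1 \leq i \leq k,\ s \in S_{i+1}\}$. Because $\card{S_1}$ is even, the copies of $e_0$ contributed by the pairs indexed by $(s, 0)$ cancel in $\Z_2$, so the source sum $\sum_{(s,i) \in P} e_{s,i}$ equals $u$; the target sum $\sum_{(s,i) \in P} e_{s,i+1}$ directly recovers $w$ after re-indexing.

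The left-to-right inclusion is the main technical obstacle. Starting from $u = \sum_{i=1}^{k}\sum_{s \in T_i} e_{s, i}$ with each $\card{T_i}$ even (by IH) and some witnessing set $P$ for $(u, w) \in \alpha$, I would inspect the coefficient of each basis element $e_{s', i'}$ in the source sum $\sum_{(s, i) \in P} e_{s, i}$, distinguishing cases by the value of $i'$. For $i' \neq 0$, the coefficient equals the indicator of $(s', i') \in P$, so the constraints on $u$ force $(s', i') \in P$ precisely when $1 \leq i' \leq k$ and $s' \in T_{i'}$, and $(s', i') \notin P$ otherwise. For $i' = 0$, where all $e_{s, 0}$ coincide with $e_0$, matching the vanishing coefficient of $e_0$ in $u$ forces $S := \{s : (s, 0) \in P\}$ to have even cardinality. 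This pins down $P$ completely, and substituting into $w = \sum_{(s, i) \in P} e_{s, i+1}$ yields the desired form, with $S$ playing the role of $S_1'$ in the final expression for $w$ and each $T_i$ becoming $S_{i+1}'$.

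The second equality, about $\alpha^{-k}(0)$, follows by the same argument applied to $\alpha^{-1}$, whose combinatorial structure is identical to that of $\alpha$ but with all arrows reversed; the base case is the second assertion of Lemma \ref{couterexample:lem:determining-a(0)}, and the induction then proceeds verbatim.
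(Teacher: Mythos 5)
Your proof is correct and shares the paper's overall architecture --- induction on $k$ with Lemma \ref{couterexample:lem:determining-a(0)} as the base case --- but the inductive step is executed along a genuinely different route. The paper first proves that every summand $e_{s_i,i}$ of the intermediate element $v\in\a^k(0)$ satisfies $i<s_i$ (by extracting, from a generator decomposition of $v\oplus w$, an outgoing $\b$-arrow for each such summand), then uses this to build an explicit shifted image $w'$ of $v$, and finally applies Lemma \ref{couterexample:lem:determining-a(0)} a \emph{second} time to the difference $w+w'\in\a(0)$. You instead parametrize the pair $(u,w)\in\a$ by a subset $P$ of the arrow set and pin $P$ down by matching coefficients of basis vectors on the source side: this determines $P$ completely except for the set $S$ of arrows out of $e_0$, whose cardinality is forced to be even because $e_0$ has coefficient $0$ in $u$, and the target sum then yields $w$ in the required form in one stroke, with no second appeal to the base-case lemma. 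The paper's ``$i<s_i$'' step is absorbed into your argument as the observation that a summand $e_{s',i'}$ of $u$ with $s'=i'$ would be forced into $P$ yet is not the source of any arrow; I would state this explicitly, since it is the one point where your sentence ``the constraints on $u$ force $(s',i')\in P$ precisely when $s'\in T_{i'}$'' is doing nontrivial work, but it is a compression rather than a gap. Both routes are sound: yours is more self-contained and arguably cleaner, while the paper's reuses the already-proven $k=1$ case to avoid repeating the parity bookkeeping inside the induction.
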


\begin{proof}

  Again, since proofs are similar, we will focus only on the first equality.
  The right-to-left inclusion is obvious.
  To prove the opposite one, we proceed by induction on $k$.
  The base case $k=1$ is precisely 
  Lemma \ref{couterexample:lem:determining-a(0)}.
  Assume that the lemma is satisfied for some $k\in\N_+$.
  If $w\in\a^{k+1}(0)$, then there exists $v\in\a^k(0)$ 
  such that $\apply{\a}{v}{w}$.
  By the induction hypothesis,
  $v=\sum_{i=1}^k \sum_{s_i\in S_i} e_{s_i, i}$,
  where $S_i\subseteq\N_i$ is of even cardinality for all $i=1,\ldots,k$.
  Suppose for a while that $i<s_i$ for $i=1,\ldots,k$ and $s_i\in S_i$.
  Then we can define $w'=\sum_{i=1}^k \sum_{s_i\in S_i} e_{s_i, i+1}$.
  Since for every $i=1,\ldots,k$ and $s_i\in S_i$ we have
  $\apply{\b}{e_{s_i, i}}{e_{s_i,i+1}}$, we deduce that $\apply{\a}{v}{w'}$.
  It means that $\apply{\a}{0}{(w+w')}$ and
  by the Lemma \ref{couterexample:lem:determining-a(0)} we deduce that
  $w+w'=\sum_{t\in T}e_{t,1}$ for some $T\subseteq \N_1$ of
  even cardinality.
  Thus
  $$
      w=\sum_{i=1}^k \sum_{s_i\in S_i} e_{s_1, i+1}+
      \sum_{t\in T}e_{t,1},
  $$
  proving the induction step.

  It remains to show that actually $i<s_i$ for $i=1,\ldots,k$ and $s_i\in S_i$.
  Since $\apply{\a}{v}{w}$,
  $$
      v\oplus w=
      \sum_{j\in J}g_j\oplus h_j,
  $$
  where $g_j, h_j \in E$ and $\apply{\b}{g_j}{h_j}$ for $j\in J$. Note that for every $i=1,\ldots,k$ and $s_i\in S_i$ there exists $j_{s_i}$ such that $e_{s_i, i}=g_{j_{s_i}}$ since
  $$
      \sum_{i=1}^k \sum_{s_i\in S_i} e_{s_i, i}=\sum_{j\in J}g_j.
  $$
  Thus for every such $i$ and $s_i$ there exists $h_{j_{s_{i}}}$ such that $\apply{\b}{e_{s_i,i}}{h_{j_{s_i}}}$, 
  which is possible only~if~$i<s_i$.
\end{proof}

\begin{lemma}
    \label{counterexample:lem:determining-image-by-representative}
    For $\a$ defined in (\ref{eq:alpha definition}), 
    $k\in\Z$ and $x,y\in A$,
    if $\apply{\a^k}{x}{y}$, then $\a^k(x)=y+\a^k(0)$.
\end{lemma}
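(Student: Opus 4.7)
The statement is really a general fact about any linear relation (the specific $\alpha$ from (\ref{eq:alpha definition}) plays no role), so the plan is to prove it purely from the submodule structure of $\alpha^k \subseteq A \oplus A$. The key observation is that for every $k\in\Z$, $\alpha^k$ is itself a linear relation, since compositions and inverses of linear relations are linear (this was noted right after the definition of relation composition). Hence $\alpha^k$ is a submodule of $A\oplus A$, and the lemma becomes the familiar statement that a nonempty fiber of a submodule of a direct sum is a coset of the fiber over $0$.

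First I would prove the inclusion $\alpha^k(x)\subseteq y+\alpha^k(0)$. Fix $z\in\alpha^k(x)$, so $(x,z)\in\alpha^k$. Since also $(x,y)\in\alpha^k$ by hypothesis and $\alpha^k$ is closed under subtraction, we get $(0,z-y)=(x,z)-(x,y)\in\alpha^k$. This gives $z-y\in\alpha^k(0)$, i.e.\ $z\in y+\alpha^k(0)$.

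For the reverse inclusion $y+\alpha^k(0)\subseteq\alpha^k(x)$, take $z=y+w$ with $w\in\alpha^k(0)$. Then $(0,w)\in\alpha^k$ and $(x,y)\in\alpha^k$, so $(x,z)=(x,y)+(0,w)\in\alpha^k$, which means $z\in\alpha^k(x)$.

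There is no real obstacle here; the only thing worth being careful about is the case $k<0$, which is handled by the convention $\alpha^{-k}=(\alpha^{-1})^k$ together with the fact that $\alpha^{-1}$ is a linear relation, so that $\alpha^k$ remains a submodule of $A\oplus A$ for every $k\in\Z$ and the same two-line argument applies uniformly.
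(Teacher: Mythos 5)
Your proof is correct and follows the same route as the paper's: both directions come from adding or subtracting the pair $(x,y)$ inside the submodule $\a^k\subseteq A\oplus A$, using that powers and inverses of linear relations are again linear. Your explicit remark about the case $k<0$ is a small, welcome clarification, but the argument is otherwise identical to the one in the paper.
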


\begin{proof}
    To prove left-to-right inclusion, let $z\in\a^k(x)$.
    Then $\apply{\a^k}{x}{z}$, thus by linearity $\apply{\a^k}{0}{(z-y)}$,
    hence $z-y\in\a^k(0)$.
    Consequently $z=y+(z-y)\in y+\a^k(0)$.
    To prove the opposite inclusion, let $n\in\a^k(0)$. 
    By linearity $\apply{\a^k}{x}{(y+n)}$, therefore $y+n\in\a^k(x)$.
\end{proof}

\begin{lemma}
  \label{counterexample:determining-gim}
  For $\a$ defined in (\ref{eq:alpha definition}) we have $\gim\a=\{0, e_0\}$.
\end{lemma}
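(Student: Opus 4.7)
The plan is to establish the two inclusions $\{0,e_0\}\subseteq\gim\a$ and $\gim\a\subseteq\{0,e_0\}$. The forward inclusion is direct: $0\in\a^l(0)\subseteq\a^l(A)$ for every $l\in\Z$, and for each $l\in\N_+$ the $\b$-chains $e_{l,-l},e_{l,-l+1},\ldots,e_0$ and $e_0,e_{l,1},\ldots,e_{l,l}$ witness $\apply{\a^l}{e_{l,-l}}{e_0}$ and $\apply{\a^l}{e_0}{e_{l,l}}$, giving $e_0\in\a^l(A)\cap\a^{-l}(A)$.

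For the opposite inclusion, take $w\in\gim\a$ and write it uniquely in the $E$-basis as $w=\epsilon e_0+\sum_{j\in J}e_{k_j,s_j}$ with $\epsilon\in\{0,1\}$, distinct pairs $(k_j,s_j)$, and $s_j\neq 0$; the goal is $J=\emptyset$. Two preparatory observations: any $y\in A$ containing $e_{k,k}$ as a basis summand satisfies $\a(y)=\emptyset$, because $e_{k,k}$ is not a first coordinate of any pair in $\b$ and therefore cannot appear in $\sum_{(g,h)\in T}g$ for any $T\subseteq\b$; dually, containing $e_{k,-k}$ forces $\a^{-1}(y)=\emptyset$. Applying these to $w\in\a(A)\cap\a^{-1}(A)$ gives $-k_j<s_j<k_j$ for every $j\in J$.

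Assume $J\neq\emptyset$ and fix $j_0\in J$. I first treat $s_{j_0}>0$; the case $s_{j_0}<0$ is dual. The claim, proved by induction on $n\in\{1,\ldots,k_{j_0}-s_{j_0}\}$, is that every $z\in\a^n(w)$ contains $e_{k_{j_0},s_{j_0}+n}$ among its basis summands. The inductive core is: if $y$ contains $e_{k_{j_0},t}$ with $0<t<k_{j_0}$, then for every $z\in\a(y)$ any decomposition $y\oplus z=\sum_{(g,h)\in T}(g,h)$ with $T\subseteq\b$ must include the unique pair $(e_{k_{j_0},t},e_{k_{j_0},t+1})$, since $e_{k_{j_0},t}$ is a basis element distinct from $e_0$ and appears as a first coordinate in $\b$ in exactly that single pair. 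Therefore $z$ acquires an $e_{k_{j_0},t+1}$ summand that is not cancelled by the remaining contributions: the natural-shift images of the other distinct basis summands of $y$ land on basis elements different from $e_{k_{j_0},t+1}$, and the contributions coming from pairs $(e_0,e_{k',1})\in\b$ all have $s$-index equal to $1$, which is strictly less than $t+1\geq 2$.

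At $n=k_{j_0}-s_{j_0}$ every element of $\a^n(w)$ contains $e_{k_{j_0},k_{j_0}}$, whence the first preparatory observation yields $\a^{n+1}(w)=\emptyset$, contradicting $w\in\a^{-(n+1)}(A)$. The dual argument for $s_{j_0}<0$ tracks the $e_{k_{j_0},s_{j_0}-n}$ summand of elements of $\a^{-n}(w)$ down to $e_{k_{j_0},-k_{j_0}}$, producing the symmetric contradiction with $w\in\a^{k_{j_0}+s_{j_0}+1}(A)$. Hence $J=\emptyset$ and $w=\epsilon e_0\in\{0,e_0\}$. The main subtlety is the non-cancellation bookkeeping in the inductive core, which relies on the distinctness of the higher-$s$ basis elements and on the fact that $e_0$-originated summands always sit at $s$-index $1$.
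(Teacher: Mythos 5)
Your proof is correct, and its overall strategy matches the paper's: isolate a basis summand $e_{k,s}$ with $s\neq 0$, push it forward (or backward) until it reaches the dead end $e_{k,k}$ (or $e_{k,-k}$), and conclude that $\a^{N}(w)=\emptyset$ for some $N$, contradicting $w\in\a^{-N}(A)$. Where you genuinely diverge is in the step that must control \emph{every} element of the fibre $\a^{n}(w)$, not just one witness. The paper first constructs an explicit $v\in\a^{k}(w)$ containing $e_{r_{i_0},r_{i_0}}$ (for $k$ the \emph{minimal} distance to a dead end among the positive-index summands, which is needed so that the other summands survive $k$ forward steps), and then describes the whole fibre as the coset $v+\a^{k}(0)$ via Lemma \ref{counterexample:lem:determining-image-by-representative}, invoking the explicit computation of $\a^{\pm k}(0)$ from Lemmas \ref{couterexample:lem:determining-a(0)} and \ref{couterexample:lem:determining-ak(0)}. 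You instead track the summand one step at a time by parity counting in a decomposition $(y,z)=\sum_{(g,h)\in T}(g,h)$ with $T\subseteq\b$: for $t\geq 1$ the element $e_{k,t}$ is the first coordinate of exactly one pair of $\b$, so that pair lies in $T$, and since $t+1\geq 2$ the element $e_{k,t+1}$ is the second coordinate of exactly that one pair, so its coefficient in $z$ is forced to be $1$; the only branching in $\b$ occurs at $e_0$ and produces summands only at $s$-index $1$ (resp.\ $-1$ for preimages), which never collide with the tracked index. This makes your argument self-contained --- it bypasses the coset lemma and the two technical lemmas computing $\a^{\pm k}(0)$, and it does not need the minimality of $k$ --- at the modest cost of repeating a small parity count at each step. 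Both arguments are sound; yours is the more economical route to this particular lemma, while the paper's leans on auxiliary computations that it partly reuses elsewhere in the appendix (e.g.\ in identifying $\LE(A,\a)$ at the end).
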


\begin{proof}
    The right-to-left inclusion is obvious. 
    To prove the opposite, let $w\in\gim\a$. Then
    $$
        w=\lambda e_0+
        \sum_{i\in\mathbb{Z}\setminus\{0\}}\sum_{r_i\in R_i} \mu_{r_i,i} e_{r_i,i},
    $$
    where $R_i\subseteq\N_i$, $\lambda, \mu_{r_i,i} \in\Z_2$ 
    and only finitely many of $\mu_{r_i, i}$ are non-zero.
    Since ${e_0\in\gim\a}$, without loss of generality $\lambda=0$. 
    Indeed, if $w\in\gim\a$, then $w+\lambda e_0\in\gim\a$ as well, but
    $$
        w+\lambda e_0=\sum_{i\in\mathbb{Z}\setminus\{0\}}\sum_{r_i\in R_i} \mu_{r_i,i} e_{r_i,i}.
    $$
    We claim that in fact $\mu_{r_i, i}=0$ 
    for all the possible choices of $i$.
    Since the proofs are analogous, we consider only the case $i > 0$.

    Assume that there exists some $\mu_{r_i,i}\neq 0$ with $i > 0$.
    Then $k:=\min\{r_i - i\colon {i\in\N_1, r_i\in R_i}\}$ 
    is well-defined and
    nonnegative, since $r_i\in R_i\subseteq \N_i$.
    We claim that $\a^{k+1}(w)=\emptyset$, thus ${w\notin\gim\a}$. 
    Let $i_0$ be such that $k=r_{i_0}-i_0$ 
    for some $r_{i_0}\in R_{i_0}$.
    We will show that each $u\in\a^k(w)$ is of the form
    $u=e_{r_{i_0},r_{i_0}}+u'$, where $u'$ is a linear combination of
    elements of $E$ different than $e_{r_{i_0},r_{i_0}}$. 
    This however will imply that $\a(u)=\emptyset$.
    Indeed, if $\apply{\a}{u}{v}$ for some $v\in A$, then 
    $$
      u\oplus v = \sum_{j\in J} g_j \oplus h_j
    $$
    for some $g_j, h_j\in E$ such that $\apply{\b}{g_j}{h_j}$.
    Since $u=e_{r_{i_0}, r_{i_0}}+u'$, there exists $j_0\in J$
    such that $e_{r_{i_0}, r_{i_0}}=g_{j_0}$. 
    However, it implies that 
    $\apply{\b}{e_{r_{i_0}, r_{i_0}}}{h_{j_0}}$, which contradicts
    the definition of $\b$.

    It remains to prove that for every $u\in\a^k(w)$ 
    such $u'$ exists.
    First, note that for $i < 0$ and $r_i \in R_i$ there 
    exists $f_{r_i, i} \in E \setminus \{e_{i_0,i_0}\}$
    such that $\apply{\b^k}{e_{r_i, i}}{f_{r_i, i}}$
    (roughly speaking, such $f_{r_i, i}$ can be found by 
    iterating $e_{r_i, i}$
    until $e_0$ and afterwards choosing $e_{l,1}$ with
    sufficiently big $l$ if necessary).
    Next, for $i > 0$ and $r_i\in R_i$ notice that 
    $e_{r_i, i+k}$ exists by the definition of $k$.
    Since $\apply{\b^k}{e_{r_i,i}}{e_{r_i,i+k}}$, 
    we deduce that $\apply{\a^k}{w}{v}$, where
    $$
        v = 
        \sum_{i=1}^{+\infty}\sum_{r_i\in R_i} \mu_{r_i,i} e_{r_i,i+k} +
        \sum_{i=-1}^{-\infty}\sum_{r_i\in R_i} \mu_{r_i,i} f_{r_i,i}.
    $$
    Thus $v=e_{r_{i_0},r_{i_0}}+v'$, where $v'$ is a 
    linear combination of elements of $E$ 
    different than $e_{r_{i_0},r_{i_0}}$.

    From the Lemma
    \ref{counterexample:lem:determining-image-by-representative}
    we know that if $u\in\a^k(w)$, then $u=e_{i_0,i_0}+v'+n$,
    where $n\in\a^k(0)$. 
    Using Lemma \ref{couterexample:lem:determining-ak(0)},
    we deduce that $n=\sum_{i=1}^k\sum_{s_i\in S_i}e_{s_i,i}$
    (where $S_i\subset \N_i$ is of even cardinality). 
    Note that $r_{i_0}>k$, therefore $n$ is a linear combination
    of elements of $E$ different than $e_{r_{i_0},r_{i_0}}$.
    For $u':=v'+n$, we have $u=e_{r_{i_0},r_{i_0}}+u'$,
    finishing the proof.
\end{proof}

Using Lemma \ref{counterexample:determining-gim},
we see that $\gim\a\not\subseteq\a(\gim\a)$ 
and $\gim\a\not\subseteq\a^{-1}(\gim\a)$.
Therefore assumption that a linear relation
is a matching in Proposition \ref{prop:gim and gker inclusions}
cannot be relaxed.

Finally, we will prove that 
$\LE \circ \LM \neq \LM \circ \LE$.
Since $\LE(A, \a)=(\Z_2, \bot)$, we deduce that
$\LM \circ \LE(A,\a)=(\Z_2, \bot)$ as well. 
From the construction of $\LE$ and $\LM$ we get that
$\LM\circ\LE(\a)=\bot$ as well 
(here $\a$ is treated as an endmorphism of $(A,\a)$ in $\Endo\r{\Z_2}$).
However, $\bot\colon(\Z_2, \bot) \mto (\Z_2, \bot)$ is not an isomorphism.
Since from Theorem \ref{thm:Leray is normal} we know that 
$\LE \circ \LM(\a)$ must be an isomorphism, the proof is finished.


\end{document}